\definecolor{navy}{rgb}{0,0,0.502}
\newtheorem{lemma}{Lemma}[section]
\newtheorem{theorem}[lemma]{Theorem}
\newtheorem{coro}[lemma]{Corollary}
\newtheorem{prop}[lemma]{Proposition}
\newtheorem{exam}[lemma]{\normalfont \scshape
 Example}
\newcommand{\R}{\mathbb{R}}
\newcommand{\N}{\mathbb{N}}
\newcommand{\bfG}{\boldsymbol{G}}
\newcommand{\bfY}{\boldsymbol{Y}}
\newcommand{\bfy}{\boldsymbol{y}}
\newcommand{\bflambda}{\boldsymbol{\lambda}}
\DeclareMathOperator{\cov}{Cov}
\DeclareMathOperator{\cor}{Corr}
\DeclareMathOperator{\var}{Var}
\DeclareMathOperator{\expect}{E}
\DeclareMathOperator{\e}{e}
\DeclareMathOperator{\Pro}{Pr}
\def\indic{\mathds{1}}
\def\diff{\,\mbox{d}}
\begin{document}

\title{Some Results on Joint Record Events}%
\author{M. Falk, A. Khorrami Chokami and S. A. Padoan}

\maketitle


\begin{abstract}
Let $X_1,X_2,\dots$ be independent and identically distributed random variables on the real line with a joint continuous distribution function $F$. The stochastic behavior of the sequence of subsequent records is well known. Alternatively to that, we investigate the stochastic behavior of arbitrary $X_j,X_k,j<k$, under the condition that they are records, without knowing their orders in the sequence of records. The results are completely different. In particular it turns out that the distribution of $X_k$, being a record, is not affected by the additional knowledge that $X_j$ is a record as well.
On the contrary, the distribution of $X_j$, being a record, is affected by the additional knowledge that $X_k$ is a record as well. If $F$ has a density, then the gain of this additional information, measured by the corresponding Kullback-Leibler distance, is $j/k$, independent of $F$. We derive the limiting joint distribution of two records, which is not a bivariate extreme value distribution. We extend this result to the case of three records. In a special case we also derive the limiting joint distribution of increments among records.
\end{abstract}

\textbf{Keywords and phrases}: Records, order statistics, Kullback-Leibler distance, domain of attraction, extreme value distribution
\section*{Introduction}

Let $X_1,X_2,\dots$ be independent and identically distributed (iid) random variables (rvs). The rv $X_m$ is a record if $X_m>\max(X_1,\dots,X_{m-1})$.
Clearly, $X_1$ is a record.
Records have been investigated extensively over the past decades, see, e.g. \citeN[Section~4.1]{resn87}, \citeN[Sections~6.2 and 6.3]{gal87}, and \citeN{arnbn98}.
Consider the indicator function
$
I_m:=\indic{(X_m \text{ is a record})},\, m\in\N.
$
It is well known that the indicator functions $I_1,I_2,\ldots$ are independent with (see, e.g., \citeN[Lemma~6.3.3]{gal87})
\begin{equation}\label{eq: p indic}
\Pro(I_m=1)=m^{-1},\quad m\in\N.
\end{equation}
Suppose that the common distribution function (df) $F$ of $X_1,X_2,\dots$ is the standard exponential df $F(x)=1-\exp(-x),\,x\geq 0$.
It is also well known that in this case the increments of subsequent records are iid rvs with
common standard exponential distribution.
Precisely,
put $T(1):=1$ and, for $n\geq 2$,
$
T(n):=\min\{m>n-1\colon X_m \text{ is a record}\}.
$
Then, $X_{T(n)},\,n\in\N$, is the sequence of records among $X_1,X_2,\dots$ and $T(n)$ is the arrival time of the $n$-th record. The increments of subsequent records are given by the sequence
$
Y_n:=X_{T(n)}-X_{T(n-1)},\,n\geq 2,\quad Y_1:=X_1.
$
Then, $Y_1,Y_2,\dots$ are iid rvs with common df $F(x)=1-\exp(-x),\,x\geq 0$. This yields
\begin{equation}\label{eq: df n-th record}
X_{T(n)}=\sum_{i=1}^n Y_i,\,n\in\N,
\end{equation}
and, thus, characterizes the distribution of the $n$-th record or the joint distribution of several numbered records $\left(X_{T(n_1)},X_{T(n_2)},\dots,X_{T(n_m)}\right),n_1<n_2<\dots<n_m$, etc.

In this paper, we drop the assumption that we \emph{know} the order of a record. Therefore, we characterize the distribution
$
\Pro\left(X_j\leq\cdot\mid X_j \text{ is a record}\right),\,j\in\N,
$
as well as the joint distribution of two records
$
\Pro\left(X_j\leq\cdot,X_k\leq\cdot\mid X_j \text{ and $X_k$ are records}\right),\,1\leq j<k.
$
We achieve this under the assumption that the joint df $F$ of $X_1,X_2,\dots$ is continuous.
In particular, we establish
the following surprising fact: Choose integers $j<k$. The distribution of $X_j$, being a record, is affected when we know that $X_k$ is a record as well.
The distribution of $X_k$, being a record, however, is not affected when we know that $X_j$ is a record as well. The corresponding information gain is measured by the Kullback-Leibler distance between the densities. This information gain is $j/k$ and it is independent of the underlying $F$.  This is the content of Section 1.
In Section 2, the asymptotic joint distribution of $X_j$ and $X_k$, suitably standardized, under the condition that they are records, is derived. This is achieved if the underlying df $F$ is in the domain of attraction of an extreme value df. The limit distribution is not an extreme value distribution. We also derive the limiting joint distribution of three records.
Finally, for the special case of a sequence of iid rvs with a common standard negative exponential distribution, we derive the asymptotic joint distribution of increments among records.

\section{Distribution of Records}
Throughout this section we suppose that $X_1,X_2\dots$ are iid rvs with a common continuous df $F$. The distribution of $X_n$, being a record, is provided by the following important result.
\begin{lemma}\label{lemma: df record}
We have for $n\in\N$
\vskip-.6cm
\[
\Pro\left(X_n\leq x\mid X_n\text{ is a record}\,\right)=\Pro\left(\max_{1\leq i\leq n}X_i\leq x\right)=F^n(x),\,x\in\R.
\]
\end{lemma}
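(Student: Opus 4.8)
The plan is to write the conditional probability as the ratio of a joint probability to $\Pro(X_n\text{ is a record})$ and to evaluate the numerator by an exchangeability argument. Since $F$ is continuous, ties occur with probability zero, so up to a null set the event that $X_n$ is a record coincides with $\{X_n=\max_{1\le i\le n}X_i\}$. By the definition of conditional probability,
$$\Pro\left(X_n\le x\mid X_n\text{ is a record}\right)=\frac{\Pro\left(X_n\le x,\ X_n=\max_{1\le i\le n}X_i\right)}{\Pro\left(X_n=\max_{1\le i\le n}X_i\right)},$$
where the denominator equals $n^{-1}$ by \eqref{eq: p indic}.

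First I would handle the numerator via symmetry. Because $X_1,\dots,X_n$ are iid, they are exchangeable, so for every index $m\in\{1,\dots,n\}$ the probability $\Pro(X_m\le x,\ X_m=\max_{1\le i\le n}X_i)$ takes one and the same value; call it $p(x)$. Again using continuity of $F$, the events $A_m:=\{X_m=\max_{1\le i\le n}X_i\}$ are pairwise disjoint up to null sets and cover the whole space, and on $A_m$ the maximum equals $X_m$. Hence
$$\left\{\max_{1\le i\le n}X_i\le x\right\}=\bigcup_{m=1}^n\left\{X_m\le x,\ X_m=\max_{1\le i\le n}X_i\right\}$$
is a disjoint union modulo a null set.

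Taking probabilities and using exchangeability to equate the $n$ summands gives $\Pro(\max_{1\le i\le n}X_i\le x)=n\,p(x)$, so the numerator is $p(x)=\tfrac1n\,\Pro(\max_{1\le i\le n}X_i\le x)$. Dividing by the denominator $n^{-1}$ cancels the factor and yields $\Pro(\max_{1\le i\le n}X_i\le x)$, which equals $F^n(x)$ by independence. As a consistency check, letting $x\to\infty$ recovers \eqref{eq: p indic}, so the computation is self-contained and would in fact reprove that the denominator is $n^{-1}$ even if one did not wish to cite it.

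The step I expect to be the only delicate point is the reduction to a disjoint, exhaustive decomposition: one must invoke continuity of $F$ to discard the null event of ties among $X_1,\dots,X_n$, so that exactly one index attains the maximum almost surely. Everything else is bookkeeping with exchangeability, and no distributional assumption beyond continuity enters.
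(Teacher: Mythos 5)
Your proof is correct, and it takes a genuinely different route from the paper's. The paper's argument hinges on R\'enyi's structural result that the vector of order statistics $(X_{1:n},\dots,X_{n:n})$ is independent of the vector of ranks $(R(X_1),\dots,R(X_n))$: it rewrites $\{X_n\le x,\,X_n\text{ is a record}\}$ as $\{X_{n:n}\le x,\,R(X_n)=n\}$, factorizes by that independence, and lets $\Pro(R(X_n)=n)=n^{-1}$ cancel the conditioning factor. You avoid this tool entirely, using only exchangeability of the iid sample together with the almost-sure partition of the space into the events $A_m=\{X_m=\max_{1\le i\le n}X_i\}$ (disjoint modulo ties, which continuity of $F$ makes null --- the one delicate point, which you flag and handle correctly). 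Your route is more elementary and self-contained; as you note, it even reproves $\Pro(I_n=1)=n^{-1}$ from \eqref{eq: p indic} as a byproduct rather than needing it as an input. What the paper's heavier machinery buys is reuse: the same independence between order statistics and ranks is exactly what drives the bivariate computation in Lemma \ref{lemma: bivariate rec exp} (splitting the sample into two blocks, each handled via its own order statistic and rank), so the paper's proof of this lemma doubles as a template for the joint-record results, whereas your symmetrization over which index attains the maximum would need more care to extend to two records, since there the two blocks interact through the constraint $\eta_{j:j}^{(1)}<\eta_{k-j:k-j}^{(2)}$.
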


Therefore, the distribution of $X_n$, being a record, coincides with that
of the largest order statistic in the sample $X_1,\dots,X_n$.\\
%
{\it Proof.} Denote by $X_{1:n}\leq \dots \leq X_{n:n}$ the order statistics pertaining to $X_1,\dots,X_n$, and by $R(X_i)=\sum_{j=1}^n\indic(X_j\leq X_i)$ the rank of $X_i,\,1\leq i\leq n$.
It is well known that the vector of order statistics $(X_{1:n},\dots,X_{n:n})$ and the vector of ranks $(R(X_1),\dots,R(X_n))$ are independent, with $\Pro(R(X_i)=k)=n^{-1}$, $1\leq i,k\leq n$; see, e.g., \citeN{renyi62}. Therefore, we obtain from equation \eqref{eq: p indic}, the final result
\vskip-1cm
\[
\begin{split}
\Pro\left(X_n\leq x\mid X_n\text{ is a record}\right)&=n\Pro\left(X_n\leq x,X_n\text{ is a record}\right)\\
&=n\Pro\left(X_{n:n}\leq x,R(X_n)=n\right)=\Pro\left(X_{n:n}\leq x\right).\qquad\qquad\qquad\qquad\qquad\Box
\end{split}
\]
%
The preceding result immediately yields the limiting distribution of $X_n$, being a record, as $n$ tends to infinity. The necessary tools are provided by univariate extreme value theory: Suppose that there exist constants $a_n>0,b_n\in\R,n\in\N$, such that
$
F^n\left(a_n x+b_n\right)\to G(x),\, x\in\R,
$
for $n\to\infty$ and for all continuity point $x$ of $G$, where $G$ is a non-degenerate df. Then, $F$ is said to be in the \emph{max-domain of attraction} of $G$, denoted by $F\in\mathcal{D}(G)$, and $G$ is a univariate extreme value distribution.
Precisely, $G$ is a member of a parametric family $\{G_\alpha\colon\alpha\in\R\}$, indexed by $\alpha\in\R$, with
$
G_\alpha(x)=\exp\left({-(1+\alpha x)}^{-1/\alpha}\right),\quad 1+\alpha x>0,
$
if $\alpha$ is different from zero, and the convention
$
G_0(x)=\lim_{\alpha\to 0}G_\alpha(x)=\exp\left(-\e^{-x}\right),\quad x\in\R,
$
(see, e.g., \citeNP{resn87} Ch. 1).
If we put in particular $F(x)=1-\exp(-x),\,x\geq 0$, then we have $F\in \mathcal{D}(G_0)$, precisely
$
F^n(x+\log n)\to\exp\left(-\e^{-x}\right),\, x\in\R,
$
and, thus,
$
\Pro\left( X_n-\log n\leq x\mid X_n\text{ is a record}\right)\to\exp\left(-\e^{-x}\right),\, x\in\R.
$
If we know the order of the records, then the limiting distribution of the $n$-th record is by equation \eqref{eq: df n-th record} and the central limit theorem,
$
{n}^{-1/2}\left(X_{T(n)}-n\right)\overset{d}{\to}\mathcal{N}(0,1), \, n\to\infty,
$
where $\overset{d}{\to}$ denotes  convergence in distribution as $n$ goes to infinity.

Next we establish the joint distribution of two records.
To simplify the notation, we suppose that the underlying df of the sequence of iid rvs
is the standard negative exponential df $F(x)=\exp(x),\,x\leq 0$.
Instead of writing $X_1,X_2,\dots$ we use with this particular underlying df the notation $\eta_1,\eta_2,\dots$.
The latter distribution is a member of the set $\{G_\alpha\colon\alpha\in\R\}$, with $\alpha=-1$ and shifted by $-1$.
In this particular case we have $F^n(x)=\exp(nx)=F(nx)$, $x\le 0$.
\begin{lemma}\label{lemma: bivariate rec exp}
We have for $1\leq j<k$ and $x_1,x_2\in\R$,
\vskip-1cm
\[
\begin{split}
\Pro(\eta_j\le x_1,\eta_k\le x_2\mid \eta_j \text{ and $\eta_k$ are records})
&=\frac{k}{k-j}\Pro\left(\eta_1\leq jx_1,\eta_2\leq(k-j) x_2,(k-j)\eta_1<j\eta_2\right)\\
&=\begin{cases}
\frac{k}{k-j}\left(\e^{(k-j)x_2}\e^{jx_1}-\frac{j}{k}\e^{kx_1}\right), & \text{ if } x_1<x_2\\
\e^{kx_2}=\Pro(\eta_k\leq x_2\mid \eta_k\text{ is a record}), & \text{ if } x_1\geq x_2. \end{cases}
\end{split}
\]
\end{lemma}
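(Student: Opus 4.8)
The plan is to reduce the conditional probability to an unconditional one and then to evaluate the resulting double integral. First I would use the independence of the record indicators together with \eqref{eq: p indic} to pin down the normalizing constant: since $\Pro(\eta_j \text{ and } \eta_k \text{ are records}) = \Pro(I_j = 1, I_k = 1) = \Pro(I_j=1)\Pro(I_k=1) = (jk)^{-1}$, the conditional probability equals $jk\,\Pro(\eta_j \le x_1, \eta_k \le x_2, I_j = 1, I_k = 1)$. The whole argument thus rests on computing this last joint probability.

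Second, I would express that joint probability as an integral. Writing the two record events as $\eta_j = \max(\eta_1, \dots, \eta_j)$ and $\eta_k = \max(\eta_1, \dots, \eta_k)$, I would condition on the values $\eta_j = u$ and $\eta_k = v$. Because $j < k$, the event that $\eta_k$ is a record forces $u < v$; moreover, given these two values, the remaining independent variables must satisfy independent threshold conditions, namely $\eta_1, \dots, \eta_{j-1}$ all below $u$ (the binding bound, since $u<v$) and $\eta_{j+1}, \dots, \eta_{k-1}$ all below $v$. This yields the factor $F(u)^{j-1} F(v)^{k-j-1}$, and with the two densities $f(u)f(v)$ and the identities $F(x)=f(x)=e^x$ for $x\le 0$, the integrand collapses to $e^{ju}e^{(k-j)v}$. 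Hence the numerator equals $\iint_{u<v,\, u\le x_1,\, v\le x_2} e^{ju}e^{(k-j)v}\,\d u\,\d v$ over the support $u,v\le 0$.

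For the first asserted equality I would perform the linear change of variables $s = ju$, $t = (k-j)v$, whose Jacobian $j(k-j)$ turns $jk$ times this integral into $\tfrac{k}{k-j}\,\Pro(\eta_1 \le jx_1, \eta_2 \le (k-j)x_2, (k-j)\eta_1 < j\eta_2)$, since the transformed constraints $u\le x_1$, $v\le x_2$, $u<v$ are carried exactly onto those appearing on the right-hand side. For the closed form I would split on the sign of $x_1 - x_2$. When $x_1 < x_2$, integrating first over $v \in (u, x_2]$ and then over $u \in (-\infty, x_1]$ produces the two exponential terms with the stated coefficients $\tfrac{k}{k-j}$ and $-\tfrac{j}{k-j}$. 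When $x_1 \ge x_2$, the constraint $\eta_j \le x_1$ is automatically implied by $\eta_j < \eta_k \le x_2 \le x_1$, so the joint event reduces to $\{\eta_k \le x_2\} \cap \{\text{both records}\}$; dividing out, the answer collapses to $\Pro(\eta_k \le x_2 \mid \eta_k \text{ is a record}) = F^k(x_2) = e^{kx_2}$ by Lemma \ref{lemma: df record} (equivalently, a one-line direct evaluation of the integral gives the same value).

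The main obstacle I anticipate is setting up the integrand correctly rather than evaluating it. The delicate points are recognizing that $\eta_k$ being the higher-index record forces $\eta_j < \eta_k$ (hence the region $u<v$), and that the first $j-1$ variables are constrained by the \emph{smaller} bound $u$, whereas the intermediate $k-j-1$ variables are constrained by $v$. Once the region $\{u<v,\,u\le x_1,\,v\le x_2\}$ and the integrand $e^{ju}e^{(k-j)v}$ are fixed, the remaining work is only routine one-dimensional exponential integration.
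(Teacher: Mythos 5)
Your proof is correct, but it takes a genuinely different route from the paper's. The paper splits $\eta_1,\dots,\eta_k$ into the two independent subsamples $(\eta_1,\dots,\eta_j)$ and $(\eta_{j+1},\dots,\eta_k)$, characterizes the joint record event through the two within-subsample rank events plus the comparison of the two subsample maxima, and then invokes R\'enyi's independence of order statistics and ranks together with the distributional identity $\eta_{m:m}\stackrel{d}{=}\eta/m$ to land directly on the representation $\frac{k}{k-j}\Pro\left(\eta_1\le jx_1,\,\eta_2\le(k-j)x_2,\,(k-j)\eta_1<j\eta_2\right)$, from which the closed form follows by conditioning on $\eta_2$. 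You instead normalize by $\Pro(I_j=1,I_k=1)=(jk)^{-1}$ (independence of the record indicators, equation \eqref{eq: p indic}) and compute the joint probability bare-handed, conditioning on $(\eta_j,\eta_k)=(u,v)$ and integrating $F(u)^{j-1}f(u)\,F(v)^{k-j-1}f(v)=\e^{ju}\e^{(k-j)v}$ over $\{u<v,\,u\le x_1,\,v\le x_2\}$; your identification of the thresholds (first $j-1$ variables below $u$, the intermediate $k-j-1$ below $v$ only, and $u<v$ forced by $\eta_k$ being the later record) is exactly right and is where the content lies, and your substitution $s=ju$, $t=(k-j)v$ does recover the paper's intermediate representation, though in your argument it is an afterthought rather than the engine. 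What each buys: yours is more elementary and self-contained (no R\'enyi, no sample splitting, no max-stability identity for $\eta_{m:m}$), while the paper's structural argument scales painlessly and is reused essentially verbatim for $d$ records in Lemma \ref{lemma: auxiliary}, where direct multi-threshold bookkeeping would be heavier. One phrasing to tighten: in the case $x_1\ge x_2$, ``dividing out'' does not by itself convert conditioning on both records into conditioning on $\{\eta_k \text{ is a record}\}$ --- the identity $\Pro(\eta_k\le x_2\mid \text{both records})=\Pro(\eta_k\le x_2\mid \eta_k\text{ is a record})$ is a conclusion (it is the first claim of Proposition \ref{prop: record marginals}), not an input; but your parenthetical fallback, the one-line evaluation $jk\int_{-\infty}^{x_2}\e^{(k-j)v}\int_{-\infty}^{v}\e^{ju}\,\diff u\,\diff v=\e^{kx_2}$ combined with Lemma \ref{lemma: df record}, closes this cleanly, so there is no actual gap.
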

\begin{proof}
Let $\eta_1^{(r)},\eta_2^{(r)},\dots$, $r=1,2$, be two independent sequences of iid copies of $\eta$.
Let $\eta_{i:n}^{(r)}$ be $i$-th order statistics and $R_m^{(r)}(\eta_j^{(r)})$ the rank of $\eta_j^{(r)}$ in the sample $\eta_1^{(r)},\dots,\eta_m^{(r)}$.
We split the sample $\eta_1,\dots,\eta_k$ into the two independent sub-samples $(\eta_1,\dots,\eta_j)=:(\eta_1^{(1)},\dots,\eta_j^{(1)})$ and $(\eta_{j+1},\dots,\eta_k)=:(\eta_1^{(2)},\dots,\eta_{k-j}^{(2)})$.
By the independence between vectors of order statistics and ranks and the fact
that the distributions of $\eta_{m:m}$ and $\eta/m$ coincide for $m\in\N$, we obtain
\vskip-1cm
\begin{align*}
&\Pro(\eta_j\le x_1,\eta_k\le x_2\mid \eta_j \text{ and $\eta_k$ are records})\\
&= jk  \Pro\left(\eta_{j:j}^{(1)}\le x_1,\, \eta_{k-j:k-j}^{(2)}\le x_2,\, R_j^{(1)}(\eta_j^{(1)})=j,\, R_{k-j}^{(2)}(\eta_{k-j}^{(2)}) = k-j,\,\eta_{j:j}^{(1)}< \eta_{k-j:k-j}^{(2)} \right)\\
&= jk  \Pro\left(\eta_{j:j}^{(1)}\le x_1,\, \eta_{k-j:k-j}^{(2)}\le x_2,\, \eta_{j:j}^{(1)}< \eta_{k-j:k-j}^{(2)}\right)\Pro\left(R_j^{(1)}(\eta_j^{(1)})=j \right) \Pro\left(R_{k-j}^{(2)}(\eta_{k-j}^{(2)}) = k-j \right)\\
&= \frac k{k-j} \Pro(\eta_1\le jx_1,\,\eta_2\le (k-j)x_2,\,(k-j)\eta_1<j\eta_2).
\end{align*}
The rest of the assertion follows from elementary computations, conditioning on $\eta_2$.
\end{proof}
The preceding result can be extended to $X_1,X_2,\dots$ with an arbitrary continuous df $F$ by putting $X_i:=F^{-1}(\exp(\eta_i)),\,i\in\N$,
where $F^{-1}(q):=\inf\{t\in\R\colon F(t)\geq q\},\,q\in (0,1)$, is the usual generalized inverse of $F$. From the general equivalence $F^{-1}(q)\leq t$ iff $q\leq F(t),\,q\in(0,1),\,t\in\R$, we obtain for $1\leq j<k$ and $y_1,y_2\in\R$
\vskip-1cm
\[
\Pro(X_j\le y_1,X_k\le y_2\mid X_j \text{ and $X_k$ are records})=\Pro(\eta_j\le \log(F(y_1)),\eta_k\le \log(F(y_2))\mid \eta_j \text{ and $\eta_k$ are records}).
\]
By putting $x_i:=\log(F(y_i)),\,i=1,2$, the following result is an immediate consequence of Lemma \ref{lemma: bivariate rec exp}.
\begin{coro}\label{coro: bivariate general X}
We have for integers $1\leq j<k$ and $y_1,y_2\in\R$
\[
\Pro(X_j\le y_1,X_k\le y_2\mid X_j \text{ and $X_k$ are records})
=\begin{cases}
%
\frac{F^j(y_1)}{k-j}(kF^{k-j}(y_2)-jF^{k-j}(y_1)), & \text{if } F(y_1)< F(y_2)\\
F^k(y_2)=\Pro(X_k\leq y_2\mid X_k\text{ is a record}\,), & \text{if } F(y_2)\leq F(y_1). \end{cases}
\]
\end{coro}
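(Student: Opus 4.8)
The plan is to deduce the statement for a general continuous $F$ from the exponential case already established in Lemma~\ref{lemma: bivariate rec exp}, by transporting the problem through the quantile transformation $X_i:=F^{-1}(\exp(\eta_i))$ introduced in the paragraph preceding the corollary. First I would confirm the distributional setup: since $\eta$ is standard negative exponential, $U_i:=\exp(\eta_i)$ is uniformly distributed on $(0,1)$, and therefore $X_i=F^{-1}(U_i)$ has the prescribed df $F$. Thus the constructed sequence $X_1,X_2,\dots$ is a legitimate iid sequence with continuous df $F$, and it suffices to compute the conditional probability for this particular realization.

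The one step that is not purely formal, and which I regard as the main obstacle, is verifying that the conditioning event is preserved, i.e.\ that $X_j$ and $X_k$ are records in the $X$-sequence if and only if $\eta_j$ and $\eta_k$ are records in the $\eta$-sequence. Since $\exp$ is strictly increasing and, because $F$ is continuous, the generalized inverse $F^{-1}$ is strictly increasing on $(0,1)$, the composition $t\mapsto F^{-1}(\exp(t))$ is strictly increasing. A strictly increasing map leaves the rank sequence of the sample unchanged, and since records are determined by ranks alone, the set of record indices of $(X_i)$ coincides with that of $(\eta_i)$ almost surely (ties occur with probability zero as $F$ is continuous). This is precisely where continuity of $F$ enters: without it the map could be locally constant, and a strict record of the $\eta$'s need not remain a strict record of the $X$'s.

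With record preservation in hand, I would use the equivalence $F^{-1}(q)\le t\iff q\le F(t)$ to rewrite $\{X_i\le y_i\}=\{\eta_i\le\log F(y_i)\}$, yielding the identity stated just before the corollary,
\[
\Pro(X_j\le y_1,X_k\le y_2\mid X_j,X_k\text{ records})=\Pro(\eta_j\le\log F(y_1),\eta_k\le\log F(y_2)\mid \eta_j,\eta_k\text{ records}).
\]
Finally I would substitute $x_i:=\log F(y_i)$ into the closed form of Lemma~\ref{lemma: bivariate rec exp}. Since $\log$ is strictly increasing, the dichotomy $x_1<x_2$ versus $x_1\ge x_2$ becomes $F(y_1)<F(y_2)$ versus $F(y_2)\le F(y_1)$, matching the two cases of the claim. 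Using $\e^{m x_i}=F^m(y_i)$, the first branch $\tfrac{k}{k-j}\bigl(\e^{(k-j)x_2}\e^{jx_1}-\tfrac{j}{k}\e^{kx_1}\bigr)$ rearranges by elementary algebra into $\tfrac{F^j(y_1)}{k-j}\bigl(kF^{k-j}(y_2)-jF^{k-j}(y_1)\bigr)$, while the second branch $\e^{kx_2}$ becomes $F^k(y_2)$. This reproduces both cases of the asserted formula and completes the argument.
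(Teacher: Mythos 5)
Your proposal is correct and takes essentially the same route as the paper, which likewise deduces the corollary from Lemma~\ref{lemma: bivariate rec exp} via the quantile transformation $X_i:=F^{-1}(\exp(\eta_i))$, the equivalence $F^{-1}(q)\leq t$ iff $q\leq F(t)$, and the substitution $x_i:=\log(F(y_i))$. Your explicit check that the strictly increasing map $t\mapsto F^{-1}(\exp(t))$ preserves the record indices (using continuity of $F$ to get strict monotonicity of $F^{-1}$ on $(0,1)$) is left implicit in the paper but is exactly the right justification.
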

Choose integers $1\leq j<k$. Next we establish the fact that the distribution
of $\eta_j$, being a record, is affected, if we know that $\eta_k$ is a record as well. The distribution of $\eta_k$, being a record, however, is not affected by the additional knowledge that $\eta_j$ is a record as well.
\begin{prop}\label{prop: record marginals}
We have for integers $1\leq j<k$ and $x_1,x_2\in\R$,
\vskip-.6cm
\[
\Pro\left(\eta_k\leq x_2\mid\eta_j\text{ and $\eta_k$ are records}\right)=\e^{k x_2}=
\Pro(\eta_k\leq x_2\mid \eta_k\text{ is a record}),\qquad x_2\leq 0,
\]
and
\vskip-.8cm
\[
\begin{split}
\Pro\left(\eta_j\leq x_1\mid\eta_j\text{ and $\eta_k$ are records}\right)
&=\frac{1}{k-j}(k\e^{j x_1}-j\e^{k x_1})\qquad x_1\leq 0,\\
&=\frac{1}{k-j}(k\Pro(\eta_j\leq x_1\mid \eta_j\text{ is a record})-j\Pro(\eta_k\leq x_1\mid \eta_k\text{ is a record})).
\end{split}
\]
\end{prop}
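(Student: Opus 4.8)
The plan is to read both marginals off the joint formula in Lemma~\ref{lemma: bivariate rec exp} by eliminating the other coordinate. The one point requiring attention is that $\eta$ is supported on $(-\infty,0]$, so $\{\eta\le 0\}$ is almost sure; consequently a marginal is recovered by pushing the redundant coordinate up to the right endpoint $0$ of the support, \emph{not} by sending it to $+\infty$ (at which the exponential expressions would diverge).

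For the distribution of $\eta_k$, I would remove the constraint on $\eta_j$ by taking $x_1=0$. Since $\{\eta_j\le 0\}$ holds almost surely, the left-hand side of Lemma~\ref{lemma: bivariate rec exp} with $x_1=0$ equals $\Pro(\eta_k\le x_2\mid\eta_j\text{ and }\eta_k\text{ are records})$. For $x_2\le 0$ we have $x_1=0\ge x_2$, so the second branch of the lemma applies and yields the value $\e^{k x_2}$. Comparing this with Lemma~\ref{lemma: df record} specialised to the standard negative exponential df, where $F^k(x_2)=\e^{k x_2}$, identifies it with $\Pro(\eta_k\le x_2\mid\eta_k\text{ is a record})$, which is the first assertion.

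For the distribution of $\eta_j$, I would symmetrically remove the constraint on $\eta_k$ by setting $x_2=0$, which is legitimate because $\{\eta_k\le 0\}$ is almost sure. For $x_1<0=x_2$ the first branch of Lemma~\ref{lemma: bivariate rec exp} is in force, and substituting $x_2=0$ collapses the factor $\e^{(k-j)x_2}$ to $1$, giving $\frac{k}{k-j}\bigl(\e^{j x_1}-\tfrac{j}{k}\e^{k x_1}\bigr)=\frac{1}{k-j}\bigl(k\e^{j x_1}-j\e^{k x_1}\bigr)$; the boundary value $x_1=0$ is immediate since both sides equal $1$. Finally, recognising $\e^{j x_1}=F^j(x_1)$ and $\e^{k x_1}=F^k(x_1)$ as the single-record distributions furnished by Lemma~\ref{lemma: df record} rewrites this as $\frac{1}{k-j}\bigl(k\,\Pro(\eta_j\le x_1\mid\eta_j\text{ is a record})-j\,\Pro(\eta_k\le x_1\mid\eta_k\text{ is a record})\bigr)$, which is the second assertion.

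The computation itself is elementary; the only genuine care is the boundary bookkeeping described in the first paragraph --- choosing to set the spare coordinate to $0$ and verifying that the correct branch of the piecewise formula governs the relevant range of the remaining variable. This is a short continuity check rather than a real obstacle.
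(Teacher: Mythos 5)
Your proof is correct and takes essentially the same approach as the paper: both obtain the marginals by setting the spare coordinate to $0$ (the right endpoint of the support, so that the corresponding event is almost sure) in Lemma \ref{lemma: bivariate rec exp}. The only cosmetic difference is that you read the result off the lemma's closed-form piecewise expression, checking which branch applies, while the paper specialises the lemma's probabilistic representation and then redoes the conditioning on $\eta_2$ that already produced that closed form.
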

\begin{proof}
From Lemma \ref{lemma: bivariate rec exp} we have
$
\Pro(\eta_k\le x_2\mid \eta_j \text{ and $\eta_k$ are records})
=\frac{k}{k-j}\Pro\left(\eta_2\leq(k-j) x_2,(k-j)\eta_1<j\eta_2\right)
$
by putting $x_1=0$, and
$
\Pro(\eta_j\le x_1\mid \eta_j \text{ and $\eta_k$ are records})
=\frac{k}{k-j}\Pro\left(\eta_1\leq jx_1,(k-j)\eta_1<j\eta_2\right)
$
by putting $x_2=0$. The assertion follows by conditioning on $\eta_2$.
\end{proof}
Let us consider records over a sequence $X_1,X_2,\dots$ of iid rvs with an arbitrary df $F$ that has a density, say $f$. Choose integers $1\leq j<k$. From Corollary \ref{coro: bivariate general X} and Proposition \ref{prop: record marginals} we obtain that the density function of the df
$
G_{j,k}(x):=\Pro(X_j\leq x\mid X_j \text{ and $X_k$ are records})
$
is
$
g_{j,k}(x)=jk{(k-j)}^{-1}f(x)\left(F^{j-1}(x)-F^{k-1}(x)\right),\, x\in\R,
$
and the density function of the df
$
G_{j}(x):=\Pro(X_j\leq x\mid X_j \text{ is a record})
$
is
$
g_{j}(x)=jf(x)F^{j-1}(x),\, x\in\R.
$
%
Suppose $X_j$ is a record.
To summarize by a single number the information, which is inherent in the additional knowledge that $X_k$ is a record as well, we compute the Kullback-Leibler divergence between the density $g_{j,k}$ and the density $g_j$. 
In a general context, the Kullback-Leibler divergence of a density $q(\cdot)$ from a density $p(\cdot)$, is defined by
\vskip-.5cm
\[
D_{KL}(p\| q):=\int_{-\infty}^{+\infty}p(x)\log\frac{p(x)}{q(x)}\diff x.
\]
It quantifies the information lost, when $q(\cdot)$ is used to approximate $p(\cdot)$.
Closely related to the Kullback-Leibler divergence, 
the Kullback-Leibler distance of $p$ and $q$ is defined by
$
D_{KL}(p,q):=D_{KL}(p\| q)+D_{KL}(q\| p).
$
Note that $D_{KL}(p\| q)\geq 0$ by Jensen's inequality.
\begin{prop}\label{prop:KL_distance_X}
The Kullback-Leibler distance between the densities $g_{j,k}$ and $g_j$ is given by
\vskip-.5cm
\[
D_{KL}(g_{j,k} , g_{j})=j/k, \qquad\text{ for $k>j\geq 1$}.
\]
\end{prop}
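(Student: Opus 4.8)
The plan is to exploit the symmetric form of the Kullback-Leibler distance, namely the identity $D_{KL}(p,q)=\int(p-q)\log(p/q)$, which follows at once from $D_{KL}(p\|q)=\int p\log(p/q)$ and $D_{KL}(q\|p)=-\int q\log(p/q)$. Applied to $p=g_{j,k}$ and $q=g_j$, the decisive feature is that the likelihood ratio involves $F$ only through a single power: from the explicit densities one computes
\[
\frac{g_{j,k}(x)}{g_j(x)}=\frac{k}{k-j}\bigl(1-F^{k-j}(x)\bigr),
\]
while the difference of the densities works out to
\[
g_{j,k}(x)-g_j(x)=\frac{j}{k-j}\,f(x)\bigl(jF^{j-1}(x)-kF^{k-1}(x)\bigr).
\]

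The second step is the substitution $u=F(x)$, $\diff u=f(x)\diff x$, which maps the integral over $\R$ to an integral over $(0,1)$ and, crucially, erases every trace of $F$; this already explains why the answer must be a universal constant depending only on $j$ and $k$. After the substitution the distance becomes
\[
D_{KL}(g_{j,k},g_j)=\frac{j}{k-j}\int_0^1\bigl(ju^{j-1}-ku^{k-1}\bigr)\,\log\!\Bigl(\tfrac{k}{k-j}\bigl(1-u^{k-j}\bigr)\Bigr)\,\diff u.
\]

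The third step is to notice that the polynomial prefactor is an exact derivative, $ju^{j-1}-ku^{k-1}=\frac{\d}{\d u}(u^j-u^k)$, and to integrate by parts against the logarithmic factor. I would set $v(u)=u^j-u^k$ and $h(u)=\log\bigl(\tfrac{k}{k-j}(1-u^{k-j})\bigr)$. The derivative $h'(u)=-(k-j)u^{k-j-1}/(1-u^{k-j})$ then meets the factor $u^j-u^k=u^j(1-u^{k-j})$ and the troublesome denominator cancels, leaving
\[
D_{KL}(g_{j,k},g_j)=\frac{j}{k-j}\,(k-j)\int_0^1 u^{k-1}\,\diff u=\frac{j}{k},
\]
once the boundary term $[v\,h]_0^1$ is shown to vanish.

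The hard part is exactly this boundary check, since $h(u)\to-\infty$ as $u\uparrow1$. Here I would use $v(1)=0$ together with the local estimates $v(u)=u^j(1-u^{k-j})\sim(k-j)(1-u)$ and $h(u)\sim\log\bigl(k(1-u)\bigr)$ as $u\uparrow1$, so that $v(u)h(u)\sim(k-j)(1-u)\log(1-u)\to0$; at $u=0$ one has $v(0)=0$ with $h(0)=\log\tfrac{k}{k-j}$ finite, so neither endpoint contributes. One should also record that $g_{j,k}$ and $g_j$ share the common support $\{0<F<1\}$ (up to a single boundary value), so both divergences are well defined, and the remaining integral is then elementary.
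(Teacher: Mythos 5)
Your proof is correct, and it takes a genuinely different route from the paper's. The paper first treats the negative exponential case and computes the two directed divergences $D_{KL}(f_{j,k}\| f_j)$ and $D_{KL}(f_j\| f_{j,k})$ \emph{separately}: each is evaluated via the substitution $t=1-\e^{(k-j)x}$ and Beta-function integrals of the form $\int_0^1(1-t)^{a-1}\log t\,\diff t=B(1,a)\bigl(\psi(1)-\psi(1+a)\bigr)$, so that digamma values appear throughout, and only at the very end does the recurrence $\psi(1+x)=\psi(x)+1/x$ collapse the sum to $j/k$; the reduction from general $F$ to the exponential case is then a separate concluding step. You instead symmetrize from the outset, writing $D_{KL}(p,q)=\int(p-q)\log(p/q)$, substitute $u=F(x)$ once to erase all dependence on $F$, and then a single integration by parts does the real work: the derivative of $\log\bigl(\tfrac{k}{k-j}(1-u^{k-j})\bigr)$ cancels algebraically against the factor $1-u^{k-j}$ hidden in $u^j-u^k$, reducing the whole computation to $\int_0^1 u^{k-1}\diff u$ with no special functions at all, and your boundary analysis $(1-u)\log(1-u)\to 0$ is exactly the right justification for the improper integration by parts. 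What each approach buys: the paper's longer calculation yields the two directed divergences individually in closed digamma form (information your symmetrized integral deliberately discards), whereas your argument is shorter, entirely elementary, and makes the universality of the answer — independence from $F$ — transparent at the moment of substitution rather than as an afterthought. One small point worth recording explicitly: merging the two divergences into the single integral $\int(p-q)\log(p/q)$ presupposes both $D_{KL}(g_{j,k}\|g_j)$ and $D_{KL}(g_j\|g_{j,k})$ are finite; since the integrand $(p-q)\log(p/q)$ is pointwise nonnegative this is innocuous, and your endpoint estimates near $u=1$ in fact verify the needed integrability, but a sentence saying so would make the proof airtight.
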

%
\begin{proof}
Firstly, we show that $D_{KL}(f_{j,k} , f_{j})=j/k$ 
with
$f_{j,k}(x)=jk{(k-j)}^{-1}(\e^{jx}-\e^{kx})$ and $f_j(x)=j\e^{jx}$, $x\le 0$,
i.e. in the case of a sequence of negative exponential random variables.
\[
\begin{split}
D_{KL}(f_{j,k}\| f_j)&=\int_{-\infty}^0\frac{jk}{k-j}\left(\e^{jx}-\e^{kx}\right)\log\left(\frac{jk}{k-j}\left(\e^{jx}-\e^{kx}\right)\frac{\e^{-jx}}{j}\right)\diff x\\
&=\frac{jk}{k-j}\left(\int_{-\infty}^0\left(\e^{jx}-\e^{kx}\right)\log\frac{k}{k-j}\diff x+\int_{-\infty}^0\left(\e^{jx}-\e^{kx}\right)\log\left(1-\e^{(k-j)x}\right)\diff x\right)\\
&=\log\frac{k}{k-j}+\frac{jk}{k-j}\int_{-\infty}^0\left(\e^{jx}-\e^{kx}\right)\log\left(1-\e^{(k-j)x}\right)\diff x.
\end{split}
\]
The substitution $t=1-\e^{(k-j)x}$ entails
\[
\int_{-\infty}^0\left(\e^{jx}-\e^{kx}\right)\log\left(1-\e^{(k-j)x}\right)\diff x=\frac{1}{k-j}\int_0^1\left({\left(1-t\right)}^{\frac{j}{k-j}-1}-{\left(1-t\right)}^{\frac{k}{k-j}-1}\right)\log t\diff t.
\]
Note that 
\[
\int_0^1{\left(1-t\right)}^{\frac{j}{k-j}-1}\log t\diff t=B\left(1,\frac{j}{k-j}\right)\left(\psi(1)-\psi\left(1+\frac{j}{k-j}\right)\right)=\frac{k-j}{j}\left(\psi(1)-\psi\left(1+\frac{j}{k-j}\right)\right)
\]
%
where $B(a,b)=\int_0^1 t^{a-1}{(1-t)}^{b-1}\diff t,\,a,b>0$, $\psi(x)=\Gamma'(x)/\Gamma(x),\,x>0$, $\Gamma(x)=\int_0^\infty t^{x-1}\e^{-t}\diff t,\,x>0$ are the Beta, Digramma and Gamma functions, respectively.
Analogously, one obtains
\[
\int_0^1{\left(1-t\right)}^{\frac{k}{k-j}-1}\log t\diff t=\frac{k-j}{k}\left(\psi(1)-\psi\left(1+\frac{k}{k-j}\right)\right).
\]
As a consequence, we obtain
\[
D_{KL}(f_{j,k}\| f_j)=\log\frac{k}{k-j}+\psi(1)+\frac{j}{k-j}\psi\left(1+\frac{k}{k-j}\right)-\frac{k}{k-j}\psi\left(1+\frac{j}{k-j}\right).
\]
Furthermore, we have,
\[
\begin{split}
D_{KL}(f_j\| f_{j,k})&=-\int_{-\infty}^0 j\e^{jx}\log\left(\frac{jk}{k-j}\left(\e^{jx}-\e^{kx}\right)\frac{\e^{-jx}}{j}\right)\diff x\\
&=-\log\frac{k}{k-j}\int_{-\infty}^0 j\e^{jx}\diff x-j\int_{-\infty}^0\e^{jx}\log\left(1-\e^{(k-j)x}\right)\diff x\\
&=-\log\frac{k}{k-j}-\frac{j}{k-j}\int_0^1{\left(1-t\right)}^{\frac{j}{k-j}-1}\log t\diff t=-\log\frac{k}{k-j}-\psi(1)+\psi\left(1+\frac{j}{k-j}\right).
\end{split}
\]
Finally, we obtain
\[
D_{KL}(f_{j,k}\| f_j)+D_{KL}(f_j\| f_{j,k})=\frac{j}{k-j}\left(\psi\left(1+\frac{k}{k-j}\right)-\psi\left(1+\frac{j}{k-j}\right)\right).
\]
The functional equation $\psi(1+x)=\psi(x)+1/x$, $x>0$, implies
\[
\psi\left(1+\frac{k}{k-j}\right)-\psi\left(1+\frac{j}{k-j}\right)=\psi\left(1+\frac{k}{k-j}\right)-\psi\left(\frac{k}{k-j}\right)=\frac{k-j}{k},
\]
which yields the assertion. For the case of a general sequence of random variables,
we have
\[
D_{KL}(g_{j,k}\| g_j)=\int_{-\infty}^{+\infty}\frac{jk}{k-j}f(x)\left(F^{j-1}(x)-F^{k-1}(x)\right)\log\left(\frac{k}{k-j}\frac{F^{j-1}(x)-F^{k-1}(x)}{F^{j-1}(x)}\right)\diff x.
\]
The substitution $t=F^{-1}\left(\exp(x)\right)$ entails that the above integral equals
\[
\int_{-\infty}^0\frac{jk}{k-j}\left(\e^{jx}-\e^{kx}\right)\log\left(\frac{jk}{k-j}\left(\e^{jx}-\e^{kx}\right)\frac{\e^{-jx}}{j}\right)\diff x=D_{KL}(f_{j,k}\| f_j).
\]
Equally, one shows that
$
D_{KL}(g_{j}\| g_{j,k})=D_{KL}(f_{j}\| f_{j,k}).
$
\end{proof}
%
%
%
%
%
Clearly, $0<D_{KL}(g_{j,k}, g_j)<1$. The Kullback-Leibler distance between the densities $g_{j,k}$ and $g_j$ gets small if $j/k$ gets small. This means that the additional knowledge that $X_k$ is a record as well, affects the distribution of $X_j$, being a record, less if $k$ gets large.
%
On the other hand, if $k=j+1$, then the information gain approaches one if $j$ gets large.
%
%
%
%
By repeating the arguments in the proof of Lemma \ref{lemma: bivariate rec exp}, one derives the joint distribution of an arbitrary number of records as it is established by the next result.
\begin{lemma}\label{lemma: auxiliary}
We have for integers $1\le j_1\dots< j_{d},\,d\in\N$, with $j_0=0$, and $x_1,\dots,x_d\leq 0$,
\vskip-1cm
\begin{align*}
&\Pro(\eta_{j_m}\le x_m, 1\leq m\leq d\mid \eta_{j_1},\dots,\eta_{j_d}\text{ are records})\\
&= \frac{\prod_{m=2}^{d}j_m}{\prod_{m=2}^{d}(j_{m}-j_{m-1})}\Pro\left(\frac{\eta_m}{j_m-j_{m-1}}\le x_m, \frac{(j_{m+1}-j_m)\eta_m}{j_{m}-j_{m-1}}
< \eta_{m+1}, 1\leq m\leq d-1,\frac{\eta_d}{j_d-j_{d-1}}\leq x_d
\right).
\end{align*}
\end{lemma}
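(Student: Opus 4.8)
The plan is to mirror the proof of Lemma \ref{lemma: bivariate rec exp}, replacing the two-block split by a $d$-block split. First I would rewrite the conditional probability as a ratio: since the record indicators $I_{j_1},\dots,I_{j_d}$ are independent with $\Pro(I_{j_m}=1)=1/j_m$, the denominator $\Pro(\eta_{j_1},\dots,\eta_{j_d}\text{ are records})$ equals $\prod_{m=1}^d j_m^{-1}$, so it remains to compute the numerator $\Pro(\eta_{j_m}\le x_m\text{ for all }m,\ \eta_{j_1},\dots,\eta_{j_d}\text{ are records})$ and multiply by $\prod_{m=1}^d j_m$.

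Next I would partition $\eta_1,\dots,\eta_{j_d}$ into $d$ consecutive, mutually independent blocks, the $m$-th block being $(\eta_{j_{m-1}+1},\dots,\eta_{j_m})$ of size $j_m-j_{m-1}$, and write $M_m$ for its maximum. The key structural step is to re-express the record event through the block maxima: $\eta_{j_m}$ is a record if and only if $\eta_{j_m}$ is the maximum of block $m$ (equivalently, its rank inside block $m$ equals $j_m-j_{m-1}$) and $M_m$ exceeds all earlier block maxima. Running this over all $m$, the joint record event becomes the intersection of the $d$ within-block maximality events with the chain $M_1<M_2<\dots<M_d$; moreover, on this event $\eta_{j_m}=M_m$, so the constraints $\eta_{j_m}\le x_m$ turn into $M_m\le x_m$.

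I would then factorise. Within each block the vector of ranks is independent of the vector of order statistics by \citeN{renyi62}, and the blocks are mutually independent; since each within-block maximality event depends only on ranks while the events $\{M_m\le x_m\}$ and $\{M_1<\dots<M_d\}$ depend only on the (block-maximal) order statistics, the numerator splits as $\big(\prod_{m=1}^d (j_m-j_{m-1})^{-1}\big)\,\Pro(M_1<\dots<M_d,\ M_m\le x_m\ \forall m)$, the factor $1/(j_m-j_{m-1})$ being the probability that the last entry of a block of size $j_m-j_{m-1}$ is its maximum. Finally, using $M_m\edist \eta/(j_m-j_{m-1})$ together with the independence of the $M_m$, I would replace each $M_m$ by $\eta_m/(j_m-j_{m-1})$ for iid copies $\eta_1,\dots,\eta_d$; the chain $M_m<M_{m+1}$ becomes $(j_{m+1}-j_m)\eta_m/(j_m-j_{m-1})<\eta_{m+1}$, and collecting the constants gives $\prod_{m=1}^d j_m/(j_m-j_{m-1})$, whose $m=1$ factor is $j_1/j_1=1$, so the product may start at $m=2$, matching the claimed constant.

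The main obstacle I anticipate is the structural step, namely verifying cleanly that ``$\eta_{j_m}$ is a record for every $m$'' is equivalent to the conjunction of the $d$ within-block maximality events and the monotone chain $M_1<\dots<M_d$, and then arguing that this conjunction factorises through the rank/order-statistic independence exactly as in the bivariate case. Once that equivalence and the independence are in place, the remaining steps (the distributional identity $M_m\edist\eta/(j_m-j_{m-1})$ and the bookkeeping of the constants) are routine.
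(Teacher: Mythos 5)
Your proof is correct and is precisely the argument the paper intends: the paper gives no separate proof of this lemma, saying only that it follows ``by repeating the arguments in the proof of Lemma \ref{lemma: bivariate rec exp}'', and your $d$-block decomposition---within-block maximality expressed through ranks, the chain $M_1<\dots<M_d$ of block maxima, R\'enyi's rank/order-statistic independence applied blockwise, and the identity $M_m\edist \eta/(j_m-j_{m-1})$---is exactly that repetition, carried out correctly (including the cancellation of the $m=1$ factor $j_1/(j_1-j_0)=1$ in the constant). The structural equivalence you flag as the main obstacle does hold: $\eta_{j_m}$ is a record iff $\eta_{j_m}=M_m$ and $M_m>\max_{i<m}M_i$, and the latter conditions for all $m$ are jointly equivalent to the monotone chain, so no gap remains.
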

The case of an arbitrary sequence of iid rvs $X_1,X_2,\dots$ with common continuous df $F$ can immediately be deduced from the preceding result via the representation $X_i=F^{-1}(\exp(\eta_i))$, $i\in\N$.
\section{Asymptotic Joint Distribution of Records}
Let $X_1,X_2,\dots$ be iid rvs with common df $F$, which is in the domain of attraction of an extreme value distribution $G$.
From Lemma \ref{lemma: df record} we immediately obtain the following result.
\begin{lemma}
Under the preceding conditions we obtain
\[
\Pro\left(\frac{X_n-b_n}{a_n} \leq x\mid X_n \text{ is a record}\right)\xrightarrow[n\to\infty]{}G(x),\qquad x\in\R.
\]
\end{lemma}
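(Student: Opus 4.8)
The plan is to reduce the statement directly to Lemma~\ref{lemma: df record} together with the defining property of the max-domain of attraction; there is no substantial work beyond a change of variables. First I would apply Lemma~\ref{lemma: df record}, which identifies the conditional df of $X_n$, given that $X_n$ is a record, with the df $F^n$ of the sample maximum $\max_{1\le i\le n}X_i$. Using $a_n>0$, so that the affine transformation preserves the inequality, one writes
\[
\Pro\left(\frac{X_n-b_n}{a_n}\le x\mid X_n\text{ is a record}\right)=\Pro\left(X_n\le a_n x+b_n\mid X_n\text{ is a record}\right)=F^n(a_n x+b_n).
\]

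Next I would invoke the hypothesis $F\in\calD(G)$. By definition this means precisely that $F^n(a_n x+b_n)\to G(x)$ as $n\to\infty$ at every continuity point $x$ of $G$, with the same norming constants $a_n,b_n$. Substituting the identity from the previous display then yields the asserted convergence at all continuity points of $G$.

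Finally, the only point requiring a word of justification is that the convergence holds for \emph{every} $x\in\R$, not merely at continuity points of $G$. This is immediate because $G$, being a univariate extreme value distribution, is a member of the family $\{G_\alpha\colon\alpha\in\R\}$, and each $G_\alpha$ is continuous on $\R$; hence every real $x$ is a continuity point and the convergence is pointwise everywhere. I do not expect any genuine obstacle: the entire content of the result is carried by Lemma~\ref{lemma: df record}, which permits replacing the record-conditioned distribution of $X_n$ by that of the sample maximum, after which classical univariate extreme value theory applies verbatim.
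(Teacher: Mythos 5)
Your proposal is correct and follows exactly the route the paper takes: the paper states this lemma as an immediate consequence of Lemma~\ref{lemma: df record}, i.e.\ one replaces the record-conditioned df of $X_n$ by $F^n$ and then invokes the defining convergence $F^n(a_nx+b_n)\to G(x)$ from $F\in\mathcal{D}(G)$. Your closing observation that extreme value distributions are continuous everywhere, so the convergence holds at every $x\in\R$, is a valid (and welcome) justification of a point the paper leaves implicit.
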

In what follows we investigate the joint asymptotic distribution of two records. We start with a sequence $\eta_1,\eta_2\dots$ of iid rvs that follow the standard negative exponential df $F(x)=\exp(x),\,x\leq 0$. From Lemma \ref{lemma: bivariate rec exp} we obtain for $x_1,x_2\in\R$,
\[
\Pro\left(\eta_j\le \frac{x_1}{j},\eta_k\le \frac{x_2}{k}\mid \eta_j \text{ and $\eta_k$ are records}\right)
=\frac{k}{k-j}\Pro\left(\eta_1\leq x_1,\eta_2\leq\frac{k-j}{k} x_2,\frac{k-j}{j}\eta_1<j\eta_2\right).
\]
We let $j=j(n)$ and $k=k(n)$ both depend on $n\in\N$ with
\begin{align}\label{eq: 1cond}
\lim_{n\to\infty}\frac jn &=\lambda_1 >0,\quad \lim_{n\to\infty}\frac kn =\lambda_2>\lambda_1.
\end{align}
The next result is a consequence of Lemma \ref{lemma: bivariate rec exp} and elementary computations.
\begin{prop}\label{prop: lim joint}
Under condition \eqref{eq: 1cond}, for all $x_1,x_2\leq 0,\,\beta_j=\lambda_j/(\lambda_2-\lambda_1)$ and $j=1,2$, we obtain
\[
\lim_{n\to\infty}\Pro\left(\eta_j\le \frac{x_1}{n},\eta_k\le \frac{x_2}{n}\mid \eta_j \text{ and $\eta_k$ are records}\right)=H_{\lambda_1,\lambda_2}(x_1,x_2),
\]
where
\begin{equation}\label{eq:cdf_uni_reco}
H_{\lambda_1,\lambda_2}(x_1,x_2)=\begin{cases}
 \e^{\lambda_1 x_1}(\beta_2\,\e^{(\lambda_2-\lambda_1)x_2} - \beta_1\,\e^{(\lambda_2-\lambda_1)x_1}),& \text{ if } x_1<x_2,\\
 \e^{\lambda_2 x_2}, &\text{ if }  x_1\geq x_2.
\end{cases}
\end{equation}
\end{prop}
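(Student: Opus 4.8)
The plan is to insert the scaled arguments $x_1/n$ and $x_2/n$ into the explicit two-case formula of Lemma \ref{lemma: bivariate rec exp} and then let $n\to\infty$ termwise, invoking condition \eqref{eq: 1cond} together with the continuity of the exponential function. A useful preliminary observation is that, because $n>0$, the inequality $x_1/n<x_2/n$ holds if and only if $x_1<x_2$; hence the dichotomy built into Lemma \ref{lemma: bivariate rec exp} passes unchanged to the dichotomy $x_1<x_2$ versus $x_1\geq x_2$ appearing in \eqref{eq:cdf_uni_reco}, and no boundary issues arise.

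On the region $x_1\geq x_2$ the conditional probability equals $\e^{k x_2/n}$ by Lemma \ref{lemma: bivariate rec exp}, and since $k/n\to\lambda_2$ this converges to $\e^{\lambda_2 x_2}$, which is the lower branch of $H_{\lambda_1,\lambda_2}$. On the region $x_1<x_2$ the same lemma gives
\[
\frac{k}{k-j}\left(\e^{(k-j)x_2/n}\,\e^{j x_1/n}-\frac{j}{k}\,\e^{k x_1/n}\right),
\]
so the task reduces to tracking the limits of the five elementary factors $k/(k-j)$, $(k-j)/n$, $j/n$, $j/k$, and $k/n$. By \eqref{eq: 1cond} these converge respectively to $\beta_2$, $\lambda_2-\lambda_1$, $\lambda_1$, $\lambda_1/\lambda_2$, and $\lambda_2$.

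Assembling these limits yields $\beta_2\,\e^{\lambda_1 x_1}\e^{(\lambda_2-\lambda_1)x_2}-\beta_1\,\e^{\lambda_2 x_1}$, where the coefficient $\beta_1=\lambda_1/(\lambda_2-\lambda_1)$ emerges as the limit of the product $\bigl(k/(k-j)\bigr)\bigl(j/k\bigr)$. Factoring $\e^{\lambda_1 x_1}$ out of both summands and rewriting $\e^{\lambda_2 x_1-\lambda_1 x_1}=\e^{(\lambda_2-\lambda_1)x_1}$ then reproduces the upper branch of \eqref{eq:cdf_uni_reco}. There is no substantive obstacle here: every limit is pointwise and immediate from the hypotheses in \eqref{eq: 1cond}, so the only care required is in the algebraic bookkeeping of the scaling constants, in particular correctly identifying how $\beta_1$, $\beta_2$, and the shared factor $\e^{\lambda_1 x_1}$ arise in the limiting expression.
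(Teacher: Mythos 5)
Your proposal is correct and matches the paper's own route exactly: the paper states Proposition \ref{prop: lim joint} as ``a consequence of Lemma \ref{lemma: bivariate rec exp} and elementary computations,'' which is precisely your termwise passage to the limit in the two-case formula, with $k/(k-j)\to\beta_2$ and $(k/(k-j))(j/k)=j/(k-j)\to\beta_1$ handled correctly. Your observation that the dichotomy $x_1<x_2$ versus $x_1\ge x_2$ is preserved under the scaling by $1/n$ is a sound (if minor) point the paper leaves implicit.
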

The  marginal df of $H_{\lambda_1,\lambda_2}$ are
\begin{equation}\label{eq:marginal_X_Y}
H_{1}(x)= H_{\lambda_1,\lambda_2}(x,0)=\beta_2\,\e^{\lambda_1 x} - \beta_1\,\e^{\lambda_2 x},\quad 
H_{2}(x)= H_{\lambda_1,\lambda_2}(0,x)=\e^{\lambda_2 x},\quad x\leq 0.
\end{equation}
Clearly, the fact that $H_2$ is independent of $\lambda_1$ reflects the fact that the distribution of $\eta_k$, being a record, is not affected by the additional knowledge that $\eta_j$ is a record as well, as shown in the previous section.
While $H_2$ is a univariate extreme value distribution, $H_1$ is not. Therefore, the bivariate df $H_{\lambda_1,\lambda_2}$ is not a multivariate extreme value distribution.
In the next result we provide the marginal means, variances and the covariance of the margins of $H_{\lambda_1,\lambda_2}$.

\begin{prop}\label{prop: uni_dep_measures}
Let $(X,Y)$ be a bivariate rv with df given in \eqref{eq:cdf_uni_reco}.
Then, we have for all $\lambda_2>\lambda_1>0$
\begin{enumerate}

\item $
\expect(X)=-\lambda_1^{-1}-\lambda_2^{-1},\quad \var(X)=\lambda_1^{-2}+\lambda_2^{-2},\quad \expect(Y)=-\lambda_2^{-1}, \quad
\var(Y)=\lambda_2^{-2}
$

%
%
\item $
\cov(X,Y)=\lambda_2^{-2},\quad
\cor(X,Y)=\frac{\lambda_1}{\sqrt{\lambda_1^2+\lambda_2^2}},\quad
\expect\left[(X-Y)^2\right] = \lambda_1^{-2}.
$
\end{enumerate}
%
%
\end{prop}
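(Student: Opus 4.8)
The plan is to obtain all six quantities from the explicit joint law of $(X,Y)$, which I first make fully available by differentiating the df in \eqref{eq:cdf_uni_reco}. On $\{x_1\ge x_2\}$ the df equals $\e^{\lambda_2 x_2}$ and is free of $x_1$, so the mixed partial derivative vanishes there. On $\{x_1<x_2\}$, rewriting $H_{\lambda_1,\lambda_2}=\beta_2\e^{\lambda_1 x_1+(\lambda_2-\lambda_1)x_2}-\beta_1\e^{\lambda_2 x_1}$ and using $\beta_2(\lambda_2-\lambda_1)=\lambda_2$, the mixed partial derivative is the density $h(x_1,x_2)=\lambda_1\lambda_2\,\e^{\lambda_1 x_1}\e^{(\lambda_2-\lambda_1)x_2}$ on $\{x_1<x_2\le 0\}$ and $0$ otherwise. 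A short check that $h$ integrates to $1$ confirms that $H_{\lambda_1,\lambda_2}$ carries no singular mass on the diagonal, so $(X,Y)$ is absolutely continuous and, in particular, $X<Y$ almost surely.

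The key step is a change of variables that diagonalises $h$. Setting $V:=-Y$ and $W:=Y-X$, both positive on the support, the map $(x_1,x_2)\mapsto(v,w)=(-x_2,\,x_2-x_1)$ has unit Jacobian and sends the exponent $\lambda_1 x_1+(\lambda_2-\lambda_1)x_2$ to $-\lambda_1 w-\lambda_2 v$. Hence the density factorises as $\lambda_2\e^{-\lambda_2 v}\cdot\lambda_1\e^{-\lambda_1 w}$ on $\{v>0,\,w>0\}$, which shows that $V$ and $W$ are independent exponential variables with rates $\lambda_2$ and $\lambda_1$. All claimed quantities then reduce to standard exponential moments, namely $\expect V=\lambda_2^{-1}$, $\var V=\lambda_2^{-2}$, $\expect W=\lambda_1^{-1}$, $\var W=\lambda_1^{-2}$ and $\expect[W^2]=2\lambda_1^{-2}$.

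Writing $Y=-V$ and $X=-V-W$ with $V$ and $W$ independent, part~(i) is immediate: $\expect Y=-\lambda_2^{-1}$, $\var Y=\lambda_2^{-2}$, $\expect X=-\lambda_1^{-1}-\lambda_2^{-1}$ and $\var X=\var V+\var W=\lambda_1^{-2}+\lambda_2^{-2}$. For part~(ii), independence gives $\cov(X,Y)=\cov(-V-W,-V)=\var V=\lambda_2^{-2}$, and dividing by $\sqrt{\var X\,\var Y}$ and simplifying yields $\cor(X,Y)=\lambda_1/\sqrt{\lambda_1^2+\lambda_2^2}$; finally $X-Y=-W$ is a negative exponential variable with rate $\lambda_1$, giving $\var(X-Y)=\lambda_1^{-2}$ and $\expect[(X-Y)^2]=\expect[W^2]=2\lambda_1^{-2}$. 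I expect no real obstacle here: the only delicate points are identifying the joint density correctly, in particular verifying that $\{x_1=x_2\}$ carries no atom, and performing the change of variables. As an independent cross-check one may compute $\expect(XY)$ directly from $h$ and read $\expect X,\var X,\expect Y,\var Y$ off the marginals in \eqref{eq:marginal_X_Y} via $\int_{-\infty}^0 x^m\lambda\e^{\lambda x}\,\d x$; the independence representation is nonetheless the most economical route and explains structurally why $\cov(X,Y)$ depends on $\lambda_2$ alone.
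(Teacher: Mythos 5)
Your argument is correct in substance and takes a genuinely different route from the paper. The paper works entirely at the level of distribution functions: it computes $\expect(X)$ and $\var(X)$ by integrating the marginal $H_1$ (via $\expect(X)=-\int_{-\infty}^0 H_1(x)\,\diff x$ and $\expect(X^2)=2\int_{-\infty}^0(-x)H_1(x)\,\diff x$ for a nonpositive rv), reads the moments of $Y$ off the exponential marginal $H_2$, and obtains $\cov(X,Y)$ from Hoeffding's covariance identity $\cov(X,Y)=\int_{-\infty}^0\int_{-\infty}^0\left(H_{\lambda_1,\lambda_2}(x,y)-H_1(x)H_2(y)\right)\diff x\,\diff y$, splitting the inner integral at the diagonal. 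You instead differentiate $H_{\lambda_1,\lambda_2}$ to get the density $\lambda_1\lambda_2\,\e^{\lambda_1 x_1+(\lambda_2-\lambda_1)x_2}$ on $\{x_1<x_2\le 0\}$, verify it has total mass one (so there is no singular mass on the diagonal), and change variables to $(V,W)=(-Y,\,Y-X)$, correctly identifying these as independent $\mathrm{Exp}(\lambda_2)$ and $\mathrm{Exp}(\lambda_1)$ variables; all moments then follow from $Y=-V$, $X=-(V+W)$. This is more economical than the paper's integrations and more informative: it explains structurally why $\cov(X,Y)=\var(V)=\lambda_2^{-2}$ depends only on $\lambda_2$, and it is consistent with Theorem \ref{teo: joint rec-diff}, where the limiting increment is exponential with rate $\lambda_1$ and the dependence asserted there is between the record and the increment, i.e.\ between your $X$ and $W$ --- not between $Y$ and $W$, which your factorization shows are independent.

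One point needs flagging. For the last item your (correct) computation gives $\expect\left[(X-Y)^2\right]=\expect\left[W^2\right]=2\lambda_1^{-2}$, whereas the proposition asserts $\expect\left[(X-Y)^2\right]=\lambda_1^{-2}$. The printed value is in fact inconsistent with the proposition's own other claims, since
\[
\expect\left[(X-Y)^2\right]=\var(X-Y)+\left(\expect(X-Y)\right)^2=\left(\var X+\var Y-2\cov(X,Y)\right)+\lambda_1^{-2}=\lambda_1^{-2}+\lambda_1^{-2}=2\lambda_1^{-2}.
\]
So the statement as written contains a typo: $\lambda_1^{-2}$ is $\var(X-Y)$ (equivalently, $X-Y=-W$ with $W\sim\mathrm{Exp}(\lambda_1)$), not the second moment; the paper's proof never addresses this item, so the slip goes unnoticed there. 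Your write-up, however, lists both $\var(X-Y)=\lambda_1^{-2}$ and $\expect\left[(X-Y)^2\right]=2\lambda_1^{-2}$ as though they verified the claim. A proof must either establish the claim as stated or explicitly note that it fails: you should say plainly that the proposition should read $\var(X-Y)=\lambda_1^{-2}$ (or $\expect\left[(X-Y)^2\right]=2\lambda_1^{-2}$), which your independence representation proves.
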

\begin{proof}
Assume that the probability law of the pairs of the rvs $(X,Y)$ is given by \eqref{eq:cdf_uni_reco}.
Then
%
\[
\begin{split}
\cov(X,Y)&=\int_{-\infty}^0\int_{-\infty}^0 H_{{\lambda_1,\lambda_2}}(x,y)-H_{{\lambda_1,\lambda_2}}(x,0)H_{{\lambda_1,\lambda_2}}(0,y) \diff x \diff y\\
&=\int_{-\infty}^0
\left(
\int_{-\infty}^y
\frac{\lambda_2\,\e^{(\lambda_2-\lambda_1)y+\lambda_1x} - \lambda_1\,\e^{\lambda_2 x}}
{\lambda_2-\lambda_1} \diff x +
\int_{y}^0 \e^{\lambda_2 y} \diff x
\right)
\diff y-\int_{-\infty}^0 \frac{\lambda_2\,\e^{\lambda_1 x} - \lambda_1\,\e^{\lambda_2 x}}
{\lambda_2-\lambda_1} \diff x
\int_{-\infty}^0 \e^{\lambda_2y}\diff y\\
&= \frac{\lambda_2^2+\lambda_1\lambda_2-2\lambda_1^2}{\lambda_1\lambda_2^2(\lambda_2-\lambda_2)}-\frac{\lambda_1+\lambda_2}{\lambda_1\lambda_2}
\frac{1}{\lambda_2}
=\frac{1}{\lambda_2^2}.
\end{split}
\]
%
The variance of $X$ is
\[
\var(X)=\expect\left(X^2\right)-\expect^2(X)=2\int_{-\infty}^0-x\,\frac{\lambda_2\,\e^{\lambda_1 x} - \lambda_1\,\e^{\lambda_2 x}}
{\lambda_2-\lambda_1}\diff x-\left(-\int_{-\infty}^0\frac{\lambda_2\,\e^{\lambda_1 x} - \lambda_1\,\e^{\lambda_2 x}}
{\lambda_2-\lambda_1}\diff x\right)^2
=\frac{\lambda_1^2+\lambda_2^2}{\lambda_1^2\lambda_2^2}.
\]
The marginal distribution of $Y$ is $\exp(\lambda_2y)$, $y\leq 0$, therefore its
mean and variance are $1/\lambda_2$ and $1/\lambda_2^2$, respectively. Finally, combining these results
\[
\cor(X,Y)=\frac{\cov(X,Y)}{\sqrt{\var(X)\var(Y)}}=
\frac{1/\lambda_2^2}{	\sqrt{(\lambda_1^2+\lambda_2^2)/\lambda_1^2\lambda_2^2 \cdot 1/\lambda_2^2}}
=\frac{\lambda_1}{\sqrt{\lambda_1^2+\lambda_2^2}}.\qedhere
\]
\end{proof}
%
%
%
The next result extends Proposition \ref{prop: lim joint} to a sequence of iid rvs, whose df $F$ satisfies $F\in\mathcal{D}(G)$.
\begin{coro}\label{coro: joint_rec_GEV}
Let $X_1,X_2,\dots$ be iid copies of a rv $X$ with a continuous distribution $F$. Assume that $F\in\mathcal{D}(G)$ with norming constants $a_n>0$ and $b_n\in\R$, $n\in\N$. Then, under Condition \eqref{eq: 1cond}, we have for $y_1,y_2\in\R$
\[
\lim_{n\to\infty}\Pro\left(\frac{X_j-b_n}{a_n}\leq y_1, \frac{X_k-b_n}{a_n}\leq y_2 \mid X_j \text{ and $X_k$ are records}\right)=G_{\lambda_1,\lambda_2}(y_1,y_2),
\]
where
\begin{equation*}
G_{\lambda_1,\lambda_2}(y_1,y_2) = \begin{cases}
G^{\lambda_1}(y_1)\left(\beta_2G(y_2)^{\lambda_2-\lambda_1}-\beta_1G(y_1)^{\lambda_2-\lambda_1}\right),&\mbox{ if }y_1<y_2,\\
G^{\lambda_2}(y_2),&\mbox{ if } y_1\ge y_2.
\end{cases}
\end{equation*}
\end{coro}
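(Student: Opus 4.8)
The plan is to read off the result from the exact finite-sample identity in Corollary~\ref{coro: bivariate general X} by feeding in the standardized thresholds $a_ny_i+b_n$ and then letting $n\to\infty$. Write $A_n:=\{X_j\text{ and }X_k\text{ are records}\}$ for the (common) conditioning event. Since $a_n>0$, the map $y\mapsto a_ny+b_n$ is a strictly increasing bijection of $\R$, so
\[
\Pro\left(\frac{X_j-b_n}{a_n}\le y_1,\frac{X_k-b_n}{a_n}\le y_2\mid A_n\right)=\Pro\left(X_j\le a_ny_1+b_n,X_k\le a_ny_2+b_n\mid A_n\right),
\]
and Corollary~\ref{coro: bivariate general X} (which needs only that $F$ be continuous, as assumed here) applies verbatim to the right-hand side. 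The whole task then reduces to computing the limits of the powers $F^m(a_ny+b_n)$ and of the prefactors $k/(k-j)$ and $j/(k-j)$ occurring there.

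The single tool I would use is the identity $F^m(a_ny+b_n)=\bigl(F^n(a_ny+b_n)\bigr)^{m/n}$. The defining relation $F\in\mathcal D(G)$ gives $F^n(a_ny+b_n)\to G(y)$, and $(u,t)\mapsto u^t$ is jointly continuous on $[0,1]\times(0,\infty)$ with $0^t=0$ for $t>0$, which disposes of the left-endpoint case $G(y)=0$ automatically. Combined with Condition~\eqref{eq: 1cond}, which forces $j/n\to\lambda_1$, $(k-j)/n\to\lambda_2-\lambda_1$ and $k/n\to\lambda_2$, this yields
\[
F^j(a_ny_1+b_n)\to G^{\lambda_1}(y_1),\quad F^{k-j}(a_ny_i+b_n)\to G^{\lambda_2-\lambda_1}(y_i)\ (i=1,2),\quad F^k(a_ny_2+b_n)\to G^{\lambda_2}(y_2),
\]
while $k/(k-j)\to\beta_2$ and $j/(k-j)\to\beta_1$. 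Substituting these limits into the two branches of Corollary~\ref{coro: bivariate general X} reproduces exactly the two branches of $G_{\lambda_1,\lambda_2}$.

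The step that needs genuine care is reconciling the two case distinctions: Corollary~\ref{coro: bivariate general X} splits on $F(y_1)$ versus $F(y_2)$, whereas $G_{\lambda_1,\lambda_2}$ splits on $y_1$ versus $y_2$. For $y_1\ge y_2$ one has $a_ny_1+b_n\ge a_ny_2+b_n$, hence $F(a_ny_2+b_n)\le F(a_ny_1+b_n)$ for every $n$, so the second branch is always in force and the limit is $G^{\lambda_2}(y_2)$. For $y_1<y_2$ with $G(y_1)<G(y_2)$ the two $n$-th powers tend to distinct limits, so $F(a_ny_1+b_n)<F(a_ny_2+b_n)$ for all large $n$ and the first branch may be used. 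The only residual situation is $y_1<y_2$ with $G(y_1)=G(y_2)$, which for an extreme value $G$ can occur solely when both values equal $0$ or both equal $1$; there the branch chosen at finite $n$ is immaterial, since a direct check (using $\beta_2-\beta_1=1$) shows that both expressions defining $G_{\lambda_1,\lambda_2}$ collapse to the same value, $0$ or $1$ respectively, to which the finite-$n$ probability converges regardless of the branch selected. I expect this boundary bookkeeping, and not any of the limit passages, to be the only mildly delicate part of the argument.
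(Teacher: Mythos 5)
Your proposal is correct, but it takes a genuinely different route from the paper's. The paper does not revisit the general finite-sample formula at this stage: it sets $\eta_m:=\log(F(X_m))$, rewrites the event as $\{\eta_j\le n\log F(a_ny_1+b_n)/n,\ \eta_k\le n\log F(a_ny_2+b_n)/n\}$, uses the equivalence of $F^n(a_nx+b_n)\to G(x)$ with $n\log F(a_nx+b_n)\to\log G(x)$, and then applies the already-proved exponential-scale limit of Proposition \ref{prop: lim joint} to land on $H_{\lambda_1,\lambda_2}(\log G(y_1),\log G(y_2))$ --- the probability-integral transform is thus invoked at the asymptotic level. You instead feed the normalized thresholds into the finite-sample identity of Corollary \ref{coro: bivariate general X} and pass to the limit there directly, via $F^m(a_ny+b_n)=\bigl(F^n(a_ny+b_n)\bigr)^{m/n}$ and joint continuity of $(u,t)\mapsto u^t$ on $[0,1]\times(0,\infty)$, bypassing Proposition \ref{prop: lim joint} entirely; both arguments ultimately rest on Lemma \ref{lemma: bivariate rec exp}. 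What your route buys is explicitness at the boundary: the paper applies Proposition \ref{prop: lim joint}, stated for fixed $x_1,x_2$, to arguments depending on $n$ (harmless, since the prelimit dfs converge to the continuous df $H_{\lambda_1,\lambda_2}$, so the convergence is locally uniform, but this is left implicit), and its equivalence $n\log F(a_nx+b_n)\to\log G(x)$ is stated only for $0<G(x)\le 1$, so the case $G(y_i)=0$ is likewise glossed over. Your branch-reconciliation step --- second branch in force for every $n$ when $y_1\ge y_2$; eventual strict inequality $F(a_ny_1+b_n)<F(a_ny_2+b_n)$ when $G(y_1)<G(y_2)$; and the degenerate cases $G(y_1)=G(y_2)\in\{0,1\}$ settled because both branch expressions converge to the common value (using $\beta_2-\beta_1=1$), so the branch selected at finite $n$ is irrelevant --- addresses exactly the points where the direct approach needs care, and is sound; the fact you use there, that $0<G(y_1)=G(y_2)<1$ is impossible for $y_1<y_2$, holds because every extreme value df has a positive density on $\{0<G<1\}$.
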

The marginal distributions are given by
$
G_{\lambda_1,\lambda_2}(y_1,\infty) = \beta_2 G(y_1)^{\lambda_1}-\beta_1 G(y_1)^{\lambda_2},\,
G_{\lambda_1,\lambda_2}(\infty,y_2) = G(y_2)^{\lambda_2},\, y_1, y_2\in\R;
$
note that the second marginal is independent of $\lambda_1$.
Note that results on the limiting distribution of joint records with \emph{known} orders in the sequence of records have been recently derived by \citeN{barakat2017asymptotic}.
\begin{proof}
Put $\eta_m:=\log(F(X_m))$, $m\in\N$. Then $\eta_1,\eta_2,\dots$ are iid standard negative exponential distribution. Since $\log(\cdot)$ and $F(\cdot)$ are monotone we obtain
\begin{align*}
&\Pro\left(X_j\leq a_ny_1+b_n,X_k\leq a_ny_2+b_n\mid X_j \text{ and $X_k$ are records}\right)\\
&= \Pro\left(\eta_j \leq \frac{n\log(F(a_ny_1+b_n))}n, \eta_k \leq \frac{n\log(F(a_ny_2+b_n))}n\mid \eta_j \text{ and $\eta_k$ are records}\right).
\end{align*}
The condition
$
F^n(a_nx+b_n)\to G(x),\, x\in\R,
$
as $n\to\infty$, is equivalent to
$
n\log(F(a_nx+b_n))\to\log(G(x)),\, 0<G(x)\le 1,
$
as $n\to\infty$.
Proposition \ref{prop: lim joint} now implies
\[
\begin{split}
&\Pro\left(\eta_j \leq \frac{n\log(F(a_ny_1+b_n))}n, \eta_k \leq \frac{n\log(F(a_ny_2+b_n))}n\mid \eta_j, \text{$\eta_k$ are records}\right)\\
&\xrightarrow[n\to\infty]{}H_{\lambda_1,\lambda_2}(\log(G(y_1)),\log(G(y_2))).\qedhere
\end{split}
\]
\end{proof}
We have established the fact that the distribution of $X_k$, being a record, is not affected if we know in addition that $X_j$ is a record as well. But what happens if we know, for example, that $X_j$, being a record, has already exceeded a fixed threshold? The answer is a straightforward consequence of our preceding results. We obtain for $y>u\in\R$, under the conditions of Corollary \ref{coro: joint_rec_GEV},
\[
\begin{split}
&\Pro\left( \frac{X_k-b_n}{a_n} \leq y\, \mid \frac{X_j-b_n}{a_n}>u, X_j \text{ and $X_k$ are records}\right)\\
&=\frac{\Pro\left( \frac{X_k-b_n}{a_n} \leq y\mid X_j \text{ and $X_k$ are records}\right)-\Pro\left(\frac{X_j-b_n}{a_n}\leq u,\frac{X_k-b_n}{a_n} \leq y\mid X_j \text{ and $X_k$ are records}\right)}{1-\Pro\left( \frac{X_j-b_n}{a_n}\leq u\mid X_j \text{ and $X_k$ are records}\right)}\\
&\xrightarrow[n\to\infty]{}\frac{G(y)^{\lambda_2}-G^{\lambda_1}(u)\left(\beta_2G(y)^{\lambda_2-\lambda_1}-\beta_1G(u)^{\lambda_2-\lambda_1}\right)}{\beta_2\left(1-G(u)^{\lambda_1}\right) -\beta_1\left(1-G(u)^{\lambda_2}\right)}.
\end{split}
\]
The results obtained so far can be extended to the case of an arbitrary number of records. However, computations become really hard. We report the case of the asymptotic joint df of three records.
\begin{prop}\label{prop: cdf trivariate}
Let $X_1,X_2,\dots$ be iid copies of a rv $X$ with a continuous distribution $F$. Assume that $F\in\mathcal{D}(G)$ with norming constants $a_n>0$ and $b_n\in\R$, $n\in\N$.
Assume also that $j=j(n)$, $k=k(n)$ and $r=r(n)$ all depending on $n\in\N$ with $j<k<r$ and
\begin{align*}
\lim_{n\to\infty}\frac jn &=\lambda_1 >0,\quad \lim_{n\to\infty}\frac kn =\lambda_2>\lambda_1,
\quad \lim_{n\to\infty}\frac r n =\lambda_3>\lambda_2.
\end{align*}
Then, for all $\bfy\in\R^3$, we have
\[
\Pro\left(\frac{X_j-b_n}{a_n}\leq y_1, \frac{X_k-b_n}{a_n}\leq y_2, \frac{X_r-b_n}{a_n}\leq y_3\mid X_j,\,X_k\text{ and $X_r$ are records} \right)\to \bfG_{\bflambda}(\bfy),
\]
as $n\to\infty$, where
\begin{equation*}\label{eq: cdf trivariate}
\bfG_{\bflambda}(\bfy) =
\begin{cases}
G(y_1)^{\lambda_1}G(y_2)^{\lambda_2-\lambda_1}\left(\beta_2\beta_6 G(y_3)^{\lambda_3-\lambda_1}-\beta_4\beta_5 G(y_2)^{\lambda_3-\lambda_1}\right)\\
 \quad -G(y_1)^{\lambda_2}
\left(\beta_1\beta_6 G(y_3)^{\lambda_3-\lambda_2}-\beta_3\beta_4 G(y_1)^{\lambda_3-\lambda_2}\right)\quad\text{if}\quad y_1\leq y_2\leq y_3\\
G(y_2)^{\lambda_2}\left(\beta_6 G(y_3)^{\lambda_3-\lambda_2} - \beta_4 G(y_2)^{\lambda_3-\lambda_2}\right),
\quad \text{if}\quad  y_2\leq y_1\leq y_3 \text{ or } y_2\leq y_3\leq y_1,\\
\beta_2\beta_5 G(y_1)^{\lambda_1} G(y_3)^{\lambda_3-\lambda_1}  - \beta_1\beta_6G(y_1)^{\lambda_2}
G(y_3)^{\lambda_3-\lambda_2} + \beta_3\beta_4 G(y_1)^{\lambda_3},
\quad \text{if}\quad   y_1\leq y_3\leq y_2,\\
G(y_3)^{\lambda_3},\quad\text{if}\quad y_3\leq y_2\leq y_1\text{ or } y_3\leq y_1\leq y_2,
\end{cases}
\end{equation*}
and where $\beta_1, \beta_2$ are as in Proposition \ref{prop: lim joint}, $\beta_3=\lambda_1/(\lambda_{3}-\lambda_1)$, $\beta_4=\lambda_2/(\lambda_{3}-\lambda_2)$,
$\beta_5=\lambda_3/(\lambda_{3}-\lambda_1)$, $\beta_6=\lambda_3/(\lambda_{3}-\lambda_2)$, and $\bflambda=(\lambda_1,\lambda_2,\lambda_3)$.
In addition, let  $\bfY=(Y_1,Y_2,Y_3)$ be a rv with df $\bfG_{\bflambda}(\bfy)$, then
the variance-covariance matrix of $\bfY$ is
\begin{equation*}
\left(
\begin{array}{ccc}
\lambda_1^{-2}+\lambda_2^{-2}+\lambda_3^{-2} &
\lambda_2^{-2}+\lambda_3^{-2}
& \lambda_3^{-2}\\
  & \lambda_2^{-2}+\lambda_3^{-2} &
  \lambda_3^{-2}\\
 &  & \lambda_3^{-2}\\
\end{array}
\right).
\end{equation*}
\end{prop}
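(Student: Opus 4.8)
The plan is to follow the same route used in the bivariate case (Lemma~\ref{lemma: bivariate rec exp}, Proposition~\ref{prop: lim joint} and Corollary~\ref{coro: joint_rec_GEV}), now starting from the three-record instance of Lemma~\ref{lemma: auxiliary}. As in the proof of Corollary~\ref{coro: joint_rec_GEV}, I would first pass to the standard negative exponential case by setting $\eta_m:=\log(F(X_m))$, so that it suffices to compute the limit of $\Pro(\eta_j\le x_1/n,\eta_k\le x_2/n,\eta_r\le x_3/n\mid \eta_j,\eta_k,\eta_r\text{ are records})$ and then transfer to a general $G$ via the threshold $x_i=\log(G(y_i))$. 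Applying Lemma~\ref{lemma: auxiliary} with $j_1=j$, $j_2=k$, $j_3=r$ and $j_0=0$, this probability equals $\frac{kr}{(k-j)(r-k)}\,\Pro(V_1\le x_1/n,V_2\le x_2/n,V_3\le x_3/n,\,V_1<V_2<V_3)$, where $V_m:=\eta_m/(j_m-j_{m-1})$ are independent negative exponential rvs with rates $j_m-j_{m-1}$. The key simplification is that the two chained inequalities of Lemma~\ref{lemma: auxiliary} collapse to the single ordering constraint $V_1<V_2<V_3$.

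Rescaling by $n$, I would set $W_m:=nV_m$, which is negative exponential with rate $(j_m-j_{m-1})/n\to\mu_m$, where $\mu_1=\lambda_1$, $\mu_2=\lambda_2-\lambda_1$ and $\mu_3=\lambda_3-\lambda_2$; the events are scale invariant and the prefactor satisfies $\frac{kr}{(k-j)(r-k)}\to\beta_2\beta_6$. Hence the limit is the law of $(W_1,W_2,W_3)$ conditioned on $\{W_1<W_2<W_3\}$, that is
\[
\bfG_{\bflambda}(\bfy)=\beta_2\beta_6\,\Pro\!\left(W_1\le \log G(y_1),\,W_2\le \log G(y_2),\,W_3\le \log G(y_3),\,W_1<W_2<W_3\right).
\]
A useful consistency check is that at $y_1=y_2=y_3=0$ one has $\beta_2\beta_6\,\Pro(W_1<W_2<W_3)=1$, since $\Pro(W_1<W_2<W_3)=\frac{\mu_3}{\mu_1+\mu_2+\mu_3}\cdot\frac{\mu_2}{\mu_1+\mu_2}=(\beta_2\beta_6)^{-1}$.

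I expect the evaluation of this probability to be the main obstacle, exactly the step the authors flag as becoming ``really hard''. For fixed thresholds the region $\{W_m\le x_m,\,m=1,2,3\}\cap\{W_1<W_2<W_3\}$ depends on the relative order of $x_1,x_2,x_3$, and the six orderings group into the four displayed cases according to which threshold is binding. Carrying out the resulting one- and two-dimensional exponential integrals and matching the $\beta_i$ bookkeeping is the bulk of the work; transferring back to a general $G$ is then immediate via $x_i=\log(G(y_i))$ and continuity, as in Corollary~\ref{coro: joint_rec_GEV}.

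For the variance-covariance matrix I would avoid integrating the CDF and instead exploit the conditional representation just obtained, working in the exponential limit as in Proposition~\ref{prop: uni_dep_measures}. Writing $E_m:=-W_m\sim\mathrm{Exp}(\mu_m)$, the constraint $\{W_1<W_2<W_3\}$ becomes $\{E_3<E_2<E_1\}$, and by the lack-of-memory property conditioning independent exponentials to be ordered produces independent gaps: $A_3:=E_3$, $A_2:=E_2-E_3$ and $A_1:=E_1-E_2$ are independent with $A_i\sim\mathrm{Exp}(\lambda_i)$, because $\mu_1+\mu_2+\mu_3=\lambda_3$, $\mu_1+\mu_2=\lambda_2$ and $\mu_1=\lambda_1$. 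Consequently $Y_m=-\sum_{i=m}^{3}A_i$, so that $\cov(Y_a,Y_b)=\sum_{i=\max(a,b)}^{3}\var(A_i)=\sum_{i=\max(a,b)}^{3}\lambda_i^{-2}$, which reproduces the stated matrix at once and recovers the marginal variances in agreement with Proposition~\ref{prop: uni_dep_measures}.
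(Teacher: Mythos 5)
Your proposal is correct, and for the limiting df it is essentially the paper's own route: the paper likewise reduces to the standard negative exponential case, uses the three-record identity of Lemma~\ref{lemma: auxiliary} with prefactor $kr/((k-j)(r-k))$ and the chained constraints (which, as you correctly observe, are exactly the ordering $V_1<V_2<V_3$ of the rescaled variables), and transfers to general $G$ via $x_i=\log G(y_i)$ as in Corollary~\ref{coro: joint_rec_GEV}. The one structural difference is the order of operations: the paper evaluates the triple exponential integral at \emph{finite} $j,k,r$ for the ordering $x_1<x_2<x_3$, obtains the other orderings by substituting arguments into that formula, and only then rescales by $n$; you pass to the limit first, so that $\bfG_{\bflambda}$ becomes the df of independent negative exponentials with rates $\mu_1=\lambda_1$, $\mu_2=\lambda_2-\lambda_1$, $\mu_3=\lambda_3-\lambda_2$ conditioned on $\{W_1<W_2<W_3\}$ --- legitimate, since the probability is a continuous function of the rates, and neatly confirmed by your check that $\beta_2\beta_6=1/\Pro(W_1<W_2<W_3)$. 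The integrals you defer are routine within this representation and do reproduce the four displayed cases (e.g., for $y_2\le y_1\le y_3$ the constraint on $W_1$ is absorbed by $W_1<W_2\le x_2\le x_1$, giving $\beta_2\beta_6\int_{-\infty}^{x_2}\mu_2\e^{(\mu_1+\mu_2)w}\left(\e^{\mu_3x_3}-\e^{\mu_3w}\right)\diff w = \e^{\lambda_2x_2}\left(\beta_6\e^{(\lambda_3-\lambda_2)x_3}-\beta_4\e^{(\lambda_3-\lambda_2)x_2}\right)$, matching the second case), so what remains is execution rather than any missing idea --- comparable to the paper itself, which computes one ordering and gets the rest by substitution.

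Where you genuinely depart from the paper, to your advantage, is the variance--covariance matrix. The paper computes $\cov(Y_1,Y_2)$ via Hoeffding's covariance identity, integrating the bivariate and univariate marginals of $\bfG_{\bflambda}$ term by term, and treats the other entries similarly. Your spacings argument gives the whole matrix at once: with $E_m=-W_m$, the conditional density on $\{e_3<e_2<e_1\}$ transforms under $a_3=e_3$, $a_2=e_2-e_3$, $a_1=e_1-e_2$ into $\lambda_1\lambda_2\lambda_3\,\e^{-\lambda_1a_1-\lambda_2a_2-\lambda_3a_3}$, the normalizing constant working out because $\mu_1\mu_2\mu_3/\Pro(E_3<E_2<E_1)=\mu_1(\mu_1+\mu_2)(\mu_1+\mu_2+\mu_3)=\lambda_1\lambda_2\lambda_3$; hence $Y_a=-\sum_{i=a}^3A_i$ with independent $A_i\sim\text{Exp}(\lambda_i)$, and $\cov(Y_a,Y_b)=\sum_{i=\max(a,b)}^3\lambda_i^{-2}$ follows immediately, together with the means $\expect(Y_a)=-\sum_{i=a}^3\lambda_i^{-1}$ for free. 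Besides being far shorter than the paper's computation, this representation makes transparent the connection to Theorem~\ref{teo: joint rec-diff}: the limiting increments among records are precisely your independent gaps $A_1,A_2$ with rates $\lambda_1,\lambda_2$, so your argument unifies the covariance claim here with the independence of increments established there.
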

\begin{proof}
Let $\eta_1,\eta_2,\ldots$ be iid rvs with a common negative exponential distribution.
%
First of all we compute the following non-asymptotic distribution when $x_1<x_2<x_3$
\vskip-0.5cm
\[
\begin{split}
&\Pro\left(\eta_j\leq x_1,\eta_k\leq x_2,\eta_r\leq x_3\mid \eta_j,\,\eta_k\text{ and $\eta_r$ are records}\right)\\
&=\frac{kr}{(k-j)(r-k)}\Pro\left(\eta_1\leq jx_1,\eta_2\leq(k-j) x_2,\eta_3\leq(r-k)x_3,(k-j)\eta_1<j\eta_2,(r-k)\eta_2<(k-j)\eta_3\right)\\
&=\frac{kr}{(k-j)(r-k)}\int_{-\infty}^{jx_1}\int_{\frac{k-j}{j}z_1}^{(k-j)x_2}\Pro\left(\frac{r-k}{k-j}z_2<\eta_3\leq x_3\right)\e^{z_2+z_1}\diff z_2\diff z_1\\
&=\frac{kr}{(k-j)(r-k)}\int_{-\infty}^{jx_1}\e^{z_1}\left(\e^{(r-k)x_3}\left(\e^{(k-j)x_2}-\e^{\frac{k-j}{j}z_1}\right)-\frac{k-j}{r-j}\left(\e^{(r-j)x_2}-\e^{\frac{r-j}{j}z_1}\right)\right)\diff z_1\\
&=\frac{kr}{(k-j)(r-k)}\left(\e^{jx_1}\left(\e^{(r-k)x_3}\e^{(k-j)x_2}-\frac{k-j}{r-j}\e^{(r-j)x_2}\right)-\frac{j}{k}\e^{kx_1}\e^{(r-k)x_3}+\frac{j(k-j)}{r(r-j)}\e^{rx_1}\right).
\end{split}
\]
The cases of $x_2< x_1< x_3$ is obtained from the
expression of the above formula by substituting $x_2$ in $x_1$. Similarly the case
$x_1< x_3< x_2$ is obtained by substituting $x_3$ in $x_2$ and lastly the case
$x_3< x_2< x_1$ is obtained by substituting
$x_3$ in both $x_1$ and $x_2$.
Then, the asymptotic distribution is easily obtained by computing
$
\lim_{n\to\infty} \Pro\left(\eta_j\leq x_1/n,\eta_k\leq x_2/n,\eta_r\leq x_3/n \mid
\eta_j,\,\eta_k \text{ and $\eta_r$ are records}\right).
$
The case of an arbitrary distribution can be deduced by following the same reasoning of Corollary \ref{coro: joint_rec_GEV} and therefore the first assertion is derived.
We compute the variance-covariance matrix. Note that the bivariate and univariate marginal distribution functions of $(Y_1,Y_2)$ are
%
%

%
\vskip-1cm
\begin{align*}
&F_{\lambda_1,\lambda_2}(x_1,x_2)=
\begin{cases}
&\left(\frac{\lambda_2\lambda_3\e^{(\lambda_2-\lambda_1)x_2}}{(\lambda_2-\lambda_1)(\lambda_3-\lambda_2)}-\frac{\lambda_2\lambda_3\e^{(\lambda_3-\lambda_1)x_2}}{(\lambda_3-\lambda_2)(\lambda_3-\lambda_1)}\right)\e^{\lambda_1x_1}-\frac{\lambda_1\lambda_3\e^{\lambda_2x_1}}{(\lambda_2-\lambda_1)(\lambda_3-\lambda_2)}-\frac{\lambda_1\lambda_2\e^{\lambda_3 x_1}}{(\lambda_3-\lambda_1)(\lambda_3-\lambda_2)},\quad\text{if}\quad x_1\leq x_2,\\
& \frac{\lambda_3\e^{\lambda_2x_2} - \lambda_2\e^{\lambda_3x_2}}{\lambda_3-\lambda_2},
\quad \text{if}\quad  x_2\leq x_1,\\
\end{cases}\\
&F_{\lambda_1}(x_1)=\frac{\lambda_2\lambda_3}{(\lambda_2-\lambda_1)(\lambda_3-\lambda_1)}\e^{\lambda_1x_1}-\frac{\lambda_1\lambda_3}{(\lambda_2-\lambda_1)(\lambda_3-\lambda_2)}\e^{\lambda_2x_1}-\frac{\lambda_1\lambda_2}{(\lambda_3-\lambda_1)(\lambda_3-\lambda_2)}\e^{\lambda_3 x_1},\,x_1\leq 0,\\
&F_{\lambda_2}(x_2)= \frac{\lambda_3\e^{\lambda_2 x_2} - \lambda_2\e^{\lambda_3 x_2}}{\lambda_3-\lambda_2},\,x_2\leq 0,
\end{align*}
where we have taken the transformation $x_j=\log(G(y_j))$, $j=1,\ldots,3$, for simplicity.
Hoeffding's covariance identity implies that
\[
\begin{split}
\cov(Y_1,Y_2)&=\int_{{(-\infty,0]}^2}F_{\lambda_1,\lambda_2}(x_1,x_2)\diff x_1\diff x_2-\int_{-\infty}^0F_{\lambda_1}(x_1)\diff x_1\int_{-\infty}^0F_{\lambda_2}(x_2)\diff x_2\\
&=\frac{\lambda_3}{\lambda_1(\lambda_2-\lambda_1)(\lambda_3-\lambda_2)}-\frac{\lambda_2}{\lambda_1(\lambda_3-\lambda_1)(\lambda_3-\lambda_2)}-\frac{\lambda_1\lambda_3}{\lambda_2^2(\lambda_2-\lambda_1)(\lambda_3-\lambda_2)}\\
&\quad+\frac{\lambda_1\lambda_2}{\lambda_3^2(\lambda_3-\lambda_1)(\lambda_3-\lambda_2)}+\frac{\lambda_3}{\lambda_2^2(\lambda_3-\lambda_2)}-\frac{\lambda_2}{\lambda_3^2(\lambda_3-\lambda_2)}-\left(\frac{1}{\lambda_1}+\frac{1}{\lambda_2}+\frac{1}{\lambda_3}\right)\left(\frac{1}{\lambda_2}+\frac{1}{\lambda_3}\right).
\end{split}
\]
By straightforward simplifications we obtain $\cov(Y_1,Y_2)=\lambda_2^{-2}+\lambda_3^{-2}$.
The other covariances are computed in a similar way. 
The expected values and the variances are derived by simple computations.
\end{proof}
Under Condition \eqref{eq: 1cond},
another application of Lemma \ref{lemma: auxiliary} yields the following results.
%
%
\begin{theorem}\label{teo: joint rec-diff}
Let $\eta_1,\eta_2,\dots$ be independent and standard negative exponential distributed rvs.
Assume that $j_i=j_i(n)\in\N$, $i=1,2,\ldots$, $n=2,3,\dots$ are sequences of integers satisfying
$
\lim_{n\to\infty}j_i/n =\lambda_i >0,
$
with $0<\lambda_1 < \lambda_2 < \cdots$.
Under these conditions and every $x\leq 0$, $y,y_1,\ldots,y_s>0$ and $m\in\N$, we have
\[
\begin{aligned}
%
&\lim_{n\to\infty}\Pro\left(\eta_{j_{i+1}}-\eta_{j_i}\leq y_i/n,1\leq i\leq s\mid \eta_{j_1}\dots\eta_{j_s} \text{ are records}\right)  = \prod_{i=1}^s\left(1-\e^{-\lambda_i y_i}\right),\\
&\lim_{n\to\infty}\Pro\left(\eta_j\leq x/n,\eta_k-\eta_j\leq y/n\mid \eta_j \text{ and $\eta_k$ are records}\right) = Q_{{\lambda_1,\lambda_2}}(x,y),\\
\end{aligned}
\]
where
\begin{equation}\label{eq:cdf_int}
Q_{{\lambda_1,\lambda_2}}(x,y)=
\begin{cases}
\beta_1 \left(\e^{(\lambda_2-\lambda_1)y}-1\right)\e^{\lambda_2 x}, &\text{ if }|x|\geq\,y,\\
\beta_2 \e^{\lambda_1 x}-\beta_1\e^{\lambda_2 x}-\e^{-\lambda_1 y}, &\text{ if }|x|<\,y.
\end{cases}\\
\end{equation}
\end{theorem}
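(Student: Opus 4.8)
The plan is to derive both limits from the exact finite-$n$ representation in Lemma \ref{lemma: auxiliary}, passing to increment coordinates \emph{before} letting $n\to\infty$. The starting observation is that Lemma \ref{lemma: auxiliary}, read simultaneously for all thresholds, identifies the conditional law of the record values with that of independent scaled exponentials under an ordering constraint: writing $\delta_m:=j_m-j_{m-1}$ and $V_m:=\eta_m/\delta_m$ (so that $V_m$ is standard negative exponential with rate $\delta_m$, its df being $\e^{\delta_m v}$ on $v\le 0$), conditionally on $\eta_{j_1},\dots,\eta_{j_d}$ being records the vector $(\eta_{j_1},\dots,\eta_{j_d})$ has the same distribution as $(V_1,\dots,V_d)$ conditioned on $V_1<\dots<V_d$. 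The normalising constant in the lemma is exactly the reciprocal of $\Pro(V_1<\dots<V_d)$, as one sees by letting all thresholds tend to $0$.

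For the first (increment) limit I would change variables from $(V_1,\dots,V_d)$ to the $d-1$ consecutive gaps $W_i:=V_{i+1}-V_i$ together with the top value $V_d$; this map is unimodular and turns the ordering constraint into $W_i>0$ and the support constraint into $V_d\le 0$, the remaining $V_m\le 0$ then following automatically. Substituting $v_m=v_d-\sum_{i=m}^{d-1}w_i$ into the exponent $\sum_{m=1}^d\delta_m v_m$ and telescoping (using $\sum_{m=1}^p\delta_m=j_p$) collapses it to $j_d v_d-\sum_{i=1}^{d-1}j_i w_i$. Hence the conditional joint density factorises into one negative exponential in $v_d$ of rate $j_d$ and independent exponentials in the $w_i$ of rates $j_i$; in particular the gaps $W_i$ are \emph{exactly} independent $\mathrm{Exp}(j_i)$ and independent of the largest record, and the constant is correct because $C\prod_{m=1}^d\delta_m=\prod_{m=1}^d j_m$. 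Rescaling, $nW_i$ has rate $j_i/n\to\lambda_i$, so $\Pro(nW_i\le y_i)\to 1-\e^{-\lambda_i y_i}$, and independence yields the product.

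For the second limit I would specialise to $d=2$ and keep the \emph{lower} record together with the single gap, i.e. pass from $(V_1,V_2)$ to $(V_1,W)$ with $W=V_2-V_1$. On the region $w>0$ the conditional density becomes, up to the constant $k/(k-j)$ and the rates $\delta_1=j$, $\delta_2=k-j$, proportional to $\e^{kv_1}\e^{(k-j)w}$, but now the ceiling $V_2\le 0$ reads $v_1\le -w$ and \emph{couples} the two coordinates. Integrating $v_1$ first and then $w$ over $(0,y/n]$, the position of $-w$ relative to the threshold $x/n$ forces a split according to whether $y\le|x|$ or $y>|x|$; carrying out the two elementary integrals and letting $n\to\infty$ with $j/n\to\lambda_1$, $k/n\to\lambda_2$, $j/(k-j)\to\beta_1$, $k/(k-j)\to\beta_2$ produces exactly the two branches of $Q_{\lambda_1,\lambda_2}$ in \eqref{eq:cdf_int}.

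The main obstacle is precisely this coupling in the second part. In the increment limit the support constraint is carried entirely by the top value $V_d$, which then integrates out and leaves the gaps genuinely independent; but when one tracks the bottom record $V_1$ and a gap simultaneously, the constraint $V_1+W\le 0$ cannot be separated, and it is this that generates the piecewise form of $Q_{\lambda_1,\lambda_2}$ with its $|x|\gtrless y$ dichotomy. Keeping the case analysis aligned with the several rate limits is the only delicate point; everything else reduces to the telescoping identity and elementary exponential integrals.
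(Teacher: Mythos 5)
Your proposal is correct, and I verified the key computations: the identification of the conditional law of $(\eta_{j_1},\dots,\eta_{j_d})$ with $(V_1,\dots,V_d)$ given $V_1<\dots<V_d$ is a legitimate reading of Lemma \ref{lemma: auxiliary} (the constant is indeed $1/\Pro(V_1<\dots<V_d)$, seen by sending all thresholds to $0$), the telescoping $\sum_{m=1}^d \delta_m v_m = j_d v_d - \sum_{i=1}^{d-1} j_i w_i$ holds because $\sum_{m\le i}\delta_m=j_i$, and the resulting density $\bigl(j_d\e^{j_d v_d}\bigr)\prod_{i=1}^{d-1}\bigl(j_i\e^{-j_i w_i}\bigr)$ on $\{v_d\le 0,\,w_i>0\}$ integrates to one with your constant $C\prod_m\delta_m=\prod_m j_m$. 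Where you differ from the paper is in the first limit: the paper keeps the order-statistic coordinates and establishes $\Pro(W_i>y_i,\,\forall i)=\prod_i\e^{-j_iy_i}$ by an induction over iterated integrals of the joint survival function, whereas your unimodular change of variables to (gaps, top value) factorizes the conditional \emph{density} in one stroke, yielding the exact finite-$n$ statement that the gaps are independent $\mathrm{Exp}(j_i)$ and independent of the largest record as a structural fact rather than as the outcome of a computation; this avoids the induction entirely and makes the passage to the limit $\prod_i(1-\e^{-\lambda_i y_i})$ immediate. For the second limit your argument and the paper's are essentially the same integral over the density $jk\,\e^{jz_1}\e^{(k-j)z_2}$ on $z_1<z_2\le 0$, merely in transposed order: the paper splits the $z_1$-range at $-y$ and integrates $z_2$ inside, while you split the $w$-range at $|x|$ and integrate $v_1$ inside; both produce the same two finite-$n$ branches $\frac{j}{k-j}(\e^{(k-j)y}-1)\e^{kx}$ and $\frac{k}{k-j}\e^{jx}-\frac{j}{k-j}\e^{kx}-\e^{-jy}$, and hence the same limits. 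Your closing observation — that the support constraint localizes on $V_d$ alone in the increment problem but couples $V_1$ and $W$ through $v_1\le -w$ in the second, which is exactly what generates the $|x|\gtrless y$ dichotomy in $Q_{\lambda_1,\lambda_2}$ — is the right diagnosis and is consistent with the paper's remark that a record and a subsequent increment are not independent.
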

The marginal distributions of \eqref{eq:cdf_int} are 
$
Q_{{\lambda_1,\lambda_2}}(x)= Q_{{\lambda_1,\lambda_2}}(x,\infty)=\beta_2\,\e^{\lambda_1 x} - \beta_1\,\e^{\lambda_2 x},\, x\leq 0,
$
and
$
Q_{\lambda_1}(y)= Q_{{\lambda_1,\lambda_2}}(0,y)=1 - \e^{-\lambda_1 y},\, y > 0.
$
These results mean that the increments $Y_1,\ldots,Y_n$ among records are independent but not identically distributed. Furthermore, a generic record $\eta_j$, $j=1,2,\ldots$ and
the increment between two records $\eta_j$ and $\eta_k,\,k>j$ are not independent.
\begin{proof}
For all $y_1,\ldots,y_s >0$, by following the same reasoning in the proof of Lemma \ref{lemma: bivariate rec exp} we have
\begin{align*}
&\Pro\left(\eta_{j_2}-\eta_{j_{1}} > y_1,\ldots, \eta_{j_s}-\eta_{j_{s-1}} > y_{s-1}\mid \eta_{j_1}\dots\eta_{j_s} \text{ are records}\right)\\
&= \frac{\prod_{m=2}^{s}j_m}{\prod_{m=2}^{s}(j_{m}-j_{m-1})}
\Pro\left( \frac{\eta_2}{j_2-j_1}-\frac{\eta_1}{j_1} > z_1,\ldots,
\frac{\eta_s}{j_{s}-j_{s-1}}-\frac{\eta_{s-1}}{j_{s-1}} > z_{s-1}\right)=\frac{\prod_{m=2}^{s}j_m}{\prod_{m=2}^{s}(j_{m}-j_{m-1})}\cdot A
\end{align*}
where
$$
A=\int_{-\infty}^0\int_{-\infty}^{(j_{s-1}-j_{s-2})\left(\frac{z_s}{j_s-j_{s-1}}-y_{s-1}\right)}\cdots\int_{-\infty}^{(j_2-j_1)\left(\frac{z_3}{j_3-j_2}-y_2\right)}
\Pro\left(\eta_1<j_1\left(\frac{z_2}{j_2-j_1}-y_1\right)\right) \prod_{i=2}^s\e^{z_i}\diff z_i.
$$
We show by induction that
\[
\int_{-\infty}^{(j_{m}-j_{m-1})\left(\frac{z_{m+1}}{j_{m+1}-j_m}-y_m\right)}\cdots\int_{-\infty}^{(j_2-j_1)\left(\frac{z_3}{j_3-j_2}-y_2\right)}
\e^{\frac{j_2}{j_2-j_1}z_2}\e^{-j_1y_1}\prod_{i=2}^m\e^{z_i}\diff z_i=\frac{\prod_{i=2}^m(j_i-j_{i-1})}{\prod_{i=2}^mj_i}\prod_{i=1}^{m}\e^{-j_iy_i}\e^{\frac{j_m z_{m+1}}{j_{m+1}-j_m}}.
\]
At the step 1 we have
$$
\int_{-\infty}^{(j_2-j_1)\left(\frac{z_3}{j_3-j_2}-y_2\right)}
\e^{\frac{j_2}{j_2-j_1}z_2}\e^{-j_1y_1}\diff z_2=\frac{j_2-j_1}{j_2}\e^{-j_1y_1-j_2y_2}\e^{\frac{j_2 z_3}{j_3-j_2}}.
$$
True for $m$. At the step $m+1$ we have
\[
\begin{split}
&\int_{-\infty}^{(j_{m+1}-j_m)\left(\frac{z_{m+2}}{j_{m+2}-j_{m+1}}-y_{m+1}\right)}\cdots\int_{-\infty}^{(j_2-j_1)\left(\frac{z_3}{j_3-j_2}-y_2\right)}
\e^{\frac{j_2}{j_2-j_1}z_2}\e^{-j_1y_1}\prod_{i=2}^{m+1}\e^{z_i}\diff z_i\\
&=\int_{-\infty}^{(j_{m+1}-j_m)\left(\frac{z_{m+2}}{j_{m+2}-j_{m+1}}-y_{m+1}\right)}\frac{\prod_{i=2}^m(j_i-j_{i-1})}{\prod_{i=2}^mj_i}\prod_{i=1}^{m}\e^{-j_iy_i}\e^{\frac{j_m z_{m+1}}{j_{m+1}-j_m}}\e^{z_{m+1}}\diff z_{m+1}\\
&=\frac{\prod_{i=2}^{m+1}(j_i-j_{i-1})}{\prod_{i=2}^{m+1}j_i}\prod_{i=1}^{m+1}\e^{-j_iy_i}\e^{\frac{j_{m+1} z_{m+2}}{j_{m+2}-j_{m+1}}}.
\end{split}
\]
As a consequence
$
A=\prod_{i=2}^{s}(j_i-j_{i-1}){\prod_{i=2}^{s}j_i}^{-1}\prod_{i=1}^{s-1}\e^{-j_iy_i}.
$
We obtain the first result
which shows that the increments among records are independent exponentials but with different parameters $0<\lambda_1<\lambda_2<\cdots<\lambda_s$.
By the assumptions we have
that $(1 - \e^{j_{i}y_{i}/n})\to (1 - \e^{\lambda_{i}y_{i}})$ as $n\to\infty$ for any $i=1,\ldots,s$.
Next, for $x\leq 0,\,y\geq 0$ we have
\begin{align*}
Q(x,y)&:=\Pro\left(\eta_j\leq x,\eta_k-\eta_j\leq y\mid \eta_j\text{ and $\eta_k$ are records}\right)\\
&=jk\Pro\left(\eta_j\leq x,\eta_k-\eta_j\leq y,\eta_j>\frac{\eta_1}{j-1},\eta_k>\max\left(\eta_j,\frac{\eta_2}{k-j-1}\right) \right)
\end{align*}
When $x\leq -y$,
\[
Q(x,y)=jk\int_{-\infty}^x\int_{z_j}^{z_j+y}\e^{(k-j)z_k}\e^{jz_j}\diff z_k\diff z_j=\frac{jk}{k-j}(\e^{(k-j)y}-1)\int_{-\infty}^x\e^{kz_j}\diff z_j=\frac{j}{k-j}(\e^{(k-j)y}-1)\e^{kx}.
\]
Therefore, $Q(x/n,y/n)\to \beta_1 \left(\e^{(\lambda_2-\lambda_1)y}-1\right)\e^{\lambda_2 x}$ as $n\to\infty$.
When $x>-y$
\[
\begin{split}
Q(x,y)&=jk\left(\int_{-\infty}^{-y}\int_{z_j}^{z_j+y}\e^{(k-j)z_k}\e^{jz_j}\diff z_k\diff z_j+\int_{-y}^x\int_{z_j}^0\e^{(k-j)z_k}\e^{jz_j}\diff z_k\diff z_j\right)\\
&=\frac{j}{k-j}(\e^{(k-j)y}-1)\e^{-ky}+\frac{jk}{k-j}\left(\frac{\e^{jx}-\e^{-jy}}{j}-\frac{\e^{kx}-\e^{-ky}}{k}\right)=\frac{k}{k-j}\e^{jx}-\frac{j}{k-j}\e^{kx}-\e^{-jy}.
\end{split}
\]
Therefore $Q(x/n,y/n)\to \beta_2 \e^{\lambda_1 x}-\beta_1\e^{\lambda_2 x}-\e^{-\lambda_1 y}$ as $n\to\infty$.
\end{proof}
\vskip-1cm
\section*{Acknowledgements}
This paper was written while the first author was a visiting professor at the Department of Decision Sciences, Bocconi University, Milan, Italy. He is grateful to his hosts for their hospitality and the extensively constructive atmosphere.
\bibliographystyle{chicago}
\bibliography{evt}

\begin{thebibliography}{}

\bibitem[\protect\citeauthoryear{Arnold, Balakrishnan, and Nagaraja}{Arnold
  et~al.}{1998}]{arnbn98}
Arnold, B.~C., N.~Balakrishnan, and H.~N. Nagaraja (1998).
\newblock {\em Records}.
\newblock Wiley Series in Probability and Statistics. New York: Wiley.

\bibitem[\protect\citeauthoryear{Barakat and Elgawad}{Barakat and
  Elgawad}{2017}]{barakat2017asymptotic}
Barakat, H. and M.~A. Elgawad (2017).
\newblock Asymptotic behavior of the joint record values, with applications.
\newblock {\em Statistics \& Probability Letters\/}~{\em 124}, 13--21.

\bibitem[\protect\citeauthoryear{Galambos}{Galambos}{1987}]{gal87}
Galambos, J. (1987).
\newblock {\em The Asymptotic Theory of Extreme Order Statistics\/} (2 ed.).
\newblock Malabar: Krieger.

\bibitem[\protect\citeauthoryear{R{\'e}nyi}{R{\'e}nyi}{1962}]{renyi62}
R{\'e}nyi, A. (1962).
\newblock Th{\'e}orie des {\'e}l{\'e}ments saillants d'une suite
  d'observations.
\newblock {\em Ann. scient. Univ. de Clermont. Math{\'e}matiques\/}~{\em
  8\/}(2), 7--13.

\bibitem[\protect\citeauthoryear{Resnick}{Resnick}{1987}]{resn87}
Resnick, S.~I. (1987).
\newblock {\em Extreme Values, Regular Variation, and Point Processes},
  Volume~4 of {\em Applied Probability}.
\newblock New York: Springer.
\newblock First Printing.

\end{thebibliography}
\end{document}